\documentclass[11pt]{amsart}

\title[Arens regularity of projective tensor products]{
 Arens regularity of projective tensor products
}
\markboth
\pagestyle{plain}
\usepackage[ansinew]{inputenc}
\usepackage{yfonts}
\usepackage{calligra}
\usepackage{amscd}
\usepackage{amssymb, latexsym}
\usepackage{mathrsfs}
\usepackage{fancyhdr}
\usepackage{enumerate}
\newcommand{\oop}{\widehat\otimes}

\usepackage{textcomp, phonetic}
\usepackage{setspace}
\usepackage{latexsym}

\usepackage[all]{xy}
\usepackage{tikz}
\theoremstyle{plain}
\newtheorem{thm}{\sc Theorem}[section]
\newtheorem{cor}[thm]{\sc Corollary}
\newtheorem{lem}[thm]{\sc Lemma}

\newtheorem{prop}[thm]{\sc Proposition}

\newcommand{\ot}{\otimes}

\newcommand{\C}{\mathbb{C}}
\newcommand{\N}{\mathbb{N}}

\newcommand{\ds}{\displaystyle}
\newcommand{\al}{\alpha}
\newcommand{\bt}{\beta}

\newenvironment{pf}{\noindent {\sc Proof:}}{\hfill $\Box$}
\begin{document}

\author[ V. Rajpal And A. Kumar]{
 VANDANA RAJPAL AND AJAY KUMAR$^{1}$}
\address{Department of Mathematics\\
University of Delhi\\
Delhi\\
India.}
\email{akumar@maths.du.ac.in}
\address{Department of Mathematics\\
University of Delhi\\
Delhi\\
India.}
\email{vandanarajpal.math@gmail.com}
\thanks{}
\footnotetext[1]{- corresponding author}

\keywords{Operator space projective tensor norm, Haagerup tensor norm, Arens regularity.}
\subjclass[2010]{Primary 46L06, Secondary 46L07,47L25.}
\maketitle

\begin{abstract}
For completely contractive Banach algebras $A$ and $B$ (respectively operator algebras $A$ and $B$), the necessary and sufficient conditions for  the operator space projective tensor
product $A\widehat{\otimes}B$ (respectively the Haagerup tensor product $A\otimes^{h}B$) to be Arens regular are obtained.
Using the non-commutative Grothendieck's inequality, we  show  that, for $C^*$-algebras $A$ and
$B$, the Arens regularity of Banach algebras
 $A\otimes^{h}B$, $A\ot^{\gamma} B$, $A\ot^{s} B$ and $A\widehat{\otimes}B$ are equivalent, where $\otimes^h$, $\otimes^{\gamma}$, $\ot^s$ and $\widehat{\otimes}$ are  the Haagerup, the Banach space projective tensor norm, the Schur tensor norm   and
the operator space projective tensor norm, respectively.
\end{abstract}
\maketitle
\section{Introduction}
For a Hilbert space $H$, let $B(H)$ denote the space of all bounded operators on $H$. An operator space $X$ on $H$ is  a closed subspace of
$B(H)$.
For a operator spaces $X$ and $Y$, and $u$ an element in the algebraic tensor product $X\otimes Y$,  the operator space projective tensor norm is defined to be\\
 \hspace*{2.3 cm }$\|u\|_{\wedge}=\inf\{\|\alpha\|\|x\|\|y\|\|\beta\|:u=\alpha(x\otimes y)\beta\},$\\
where $\alpha\in M_{1,pq}$, $\beta\in M_{pq,1}$, $x\in M_{p}(X) $ and $y\in M_{q}(Y)$, $p,q\in \mathbb{N}$, and
$x\otimes y=(x_{ij}\otimes y_{kl})_{(i,k),(j,l)}\in M_{pq}(X\otimes Y)$.
The normed space $ (X\ot Y, \|\cdot\|_{\wedge})$ will be  denoted by $X\ot_{\wedge} Y$ and the
completion of $X\ot_{\wedge} Y$ is denoted by $X\widehat{\ot} Y$, known as the operator space
projective tensor product of $X$ and $Y$. The Haagerup norm on the algebraic tensor product of
two operator spaces $X$ and $Y$ is defined, for $u\in X\otimes Y$, by
$\|u\|_{h}=\inf\{\|x\| \|y\|\}$,  where infimum is taken over all the expressions $ u=x\odot y=\ds\sum_{k=1}^r x_{1k}\ot y_{k1}$, where
$x\in M_{1,r}(X), y\in M_{r,1}(Y), r\in\mathbb{N} $. The Haagerup tensor product $X\otimes^{h}Y$ is defined to be the completion of $X\otimes Y$
in the norm $\|\cdot\|_{h}$.
%

It is well known that the Haagerup tensor norm is
injective, associative, functorial and projective, and may be used to linearize the complete bounded bilinear forms, that is,  $CB(X\times Y , \C)\cong (X\otimes^{h} Y)^*$, $CB(X\times Y , \C)$ denotes the collection of complete bounded bilinear forms on $X\times Y$. A bilinear form
  $\phi: X\times Y \to \C$ is said to be
completely bounded if $ \|\phi\|_{cb}:=\displaystyle\sup_{n}\|\phi_n\| < \infty$, where the $n^{th}$
amplification is a map $\phi_{n}:M_n(X)\times M_{n}(
Y)
\to M_{n}(\C)$ defined by specifying the $(i,j)$ entry of
$\phi_n \big( (x_{ij}), (y_{ij}))$ to be $\ds\sum_{k} \phi(x_{ik}, y_{kj})$. The operator space projective
tensor norm is projective, functorial, associative and symmetric. But it is not injective and may be used to linearize the jointly completely bounded
bilinear forms,  that is,  $JCB(X\times Y , \C)\cong (X\oop Y)^*$.
 $JCB(X\times Y , \C)$ denotes the Banach space of  jointly completely bounded bilinear forms, which
becomes an operator space by identifying $M_n(JCB(X\times
Y ,\C))$ with $JCB(X \times Y ,M_n(\C))$, for each $n \in \N$, and a bilinear form $\varphi: X \times Y
 \to \C$ is said to be  jointly
completely bounded if the associated maps $\varphi_{n}:M_n(X)\times M_{n}(Y)\to M_{n^2}(\C)$ given by
\begin{equation}
\varphi_n \big( (x_{ij}), (y_{kl}) \big )=\big(\varphi(x_{ij},y_{kl})\big), \, \, n \in \mathbb{N}\notag
\end{equation}
are uniformly bounded, and in this case we denote $\|\varphi\|_{jcb} = \displaystyle\sup_{n}\|\varphi_n\|$, see \cite{effros}, \cite{pisi} for the development of tensor product of operator spaces.

In 1951, Arens showed that the second dual space $A^{**}$ of a Banach algebra $A$ admits two Banach algebra
products known as the first  and second  Arens products.  Each of
these products extends the original multiplication on $A$ when
$A$ is canonically imbedded in its second dual $A^{**}$. In this  note, we wish to draw attention  when the two Arens products agree
on the second dual of $A\widehat{\ot} B$ ( $A\ot^{h} B$) for completely contractive Banach algebras $A$ and $B$ (for operator algebras $A$ and
$B$).  It is shown that for completely contractive Banach algebras  $A$ and $B$ (for operator algebras $A$ and $B$), $A\widehat{\otimes} B$ ($A\otimes^{h} B$) is Arens regular if and only if every jointly completely bounded bilinear form $m:A\times B\to \mathbb{C}$ (resp. every completely bounded bilinear form
$m:A\times B\to \C$) is  biregular. This is then used to show an astonishing fact that, for $C^*$-algebras $A$ and $B$, the Arens regularity of
 $A\widehat{\otimes}B$ and $A\otimes^{h}B$ (resp. $A\ot^\gamma B$) is equivalent. Furthermore, for exact operator algebras $V$ and $W$, the Arens regularity
of $V\ot^h W$ and $V\widehat{\ot} W$ is shown to be equivalent.





\section{Arens Regularity of $A\widehat{\ot} B$}

For an operator space $X$, a closed subspace $\tilde{X}$ of $X$ is said to be
completely complemented if there exists a completely bounded (cb) projection $P$ from $X$ onto $\tilde{X}$.

We begin by stating a lemma which follows easily using the functorial property of operator space projective tensor product.

\begin{lem}\label{sy6}
Let $\tilde{X}$, $\tilde{Y}$ be completely  complemented subspaces of the operator spaces $X$ and $Y$ complemented by cb
projection having  cb norm equal to 1, respectively. Then $\tilde{X}\widehat{\otimes} \tilde{Y}$ is a closed subspace of $X\widehat{\otimes} Y$.
\end{lem}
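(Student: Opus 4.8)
The plan is to exploit the functoriality of the operator space projective tensor product together with the fact that the completely bounded projections have $\mathrm{cb}$-norm $1$, in order to manufacture a $\mathrm{cb}$-projection on the tensor product and then invoke projectivity to identify its range. Let me sketch the steps.

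First I would let $P\colon X\to\tilde{X}$ and $Q\colon Y\to\tilde{Y}$ denote the two $\mathrm{cb}$-projections, so $\|P\|_{cb}=\|Q\|_{cb}=1$, $P^2=P$, $Q^2=Q$, and $P(X)=\tilde{X}$, $Q(Y)=\tilde{Y}$. By functoriality of $\widehat{\otimes}$, the completely bounded maps $P$ and $Q$ induce a completely bounded map $P\widehat{\otimes}Q\colon X\widehat{\otimes}Y\to X\widehat{\otimes}Y$ with $\|P\widehat{\otimes}Q\|_{cb}\le\|P\|_{cb}\,\|Q\|_{cb}=1$. Since $P$ and $Q$ are idempotent, $(P\widehat{\otimes}Q)^2=P^2\widehat{\otimes}Q^2=P\widehat{\otimes}Q$ on the algebraic tensor product, and this identity extends by continuity to all of $X\widehat{\otimes}Y$; hence $P\widehat{\otimes}Q$ is a $\mathrm{cb}$-projection.

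Next I would identify the range of this projection. On the algebraic level the range of $P\otimes Q$ is precisely $\tilde{X}\otimes\tilde{Y}$, and the closure of the range of a bounded idempotent equals the range. The key point is to show that the closure of $\tilde{X}\otimes\tilde{Y}$ inside $X\widehat{\otimes}Y$, equipped with the restricted norm, is completely isometrically (or at least isometrically) $\tilde{X}\widehat{\otimes}\tilde{Y}$. This is exactly where projectivity of $\widehat{\otimes}$ enters: because $P$ and $Q$ are complete quotient-type maps onto their ranges with $\mathrm{cb}$-norm $1$, the induced map $\tilde{X}\widehat{\otimes}\tilde{Y}\to X\widehat{\otimes}Y$ is a complete isometry onto the range of $P\widehat{\otimes}Q$. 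The inclusion $\tilde{X}\widehat{\otimes}\tilde{Y}\hookrightarrow X\widehat{\otimes}Y$ is automatically $\mathrm{cb}$-contractive by functoriality of the inclusion maps, and the projection $P\widehat{\otimes}Q$ provides the $\mathrm{cb}$-contractive left inverse, so the two estimates together force the inclusion to be a complete isometry.

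The main obstacle I anticipate is the identification of the closed range of $P\widehat{\otimes}Q$ with the completed tensor product $\tilde{X}\widehat{\otimes}\tilde{Y}$ rather than merely with the closure of $\tilde{X}\otimes\tilde{Y}$ in the ambient norm: one must verify that the norm that $X\widehat{\otimes}Y$ induces on $\tilde{X}\otimes\tilde{Y}$ coincides with the intrinsic $\|\cdot\|_{\wedge}$-norm of $\tilde{X}\widehat{\otimes}\tilde{Y}$. The inequality $\|u\|_{\wedge,X\widehat{\otimes}Y}\le\|u\|_{\wedge,\tilde{X}\widehat{\otimes}\tilde{Y}}$ for $u\in\tilde{X}\otimes\tilde{Y}$ follows from the contractivity of the inclusion, while the reverse inequality follows by applying the norm-$1$ projection $P\widehat{\otimes}Q$ to any factorization witnessing the ambient norm and using that $P$, $Q$ do not increase the relevant matrix norms. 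Once this two-sided estimate is in place, $\tilde{X}\widehat{\otimes}\tilde{Y}$ is closed in $X\widehat{\otimes}Y$, completing the proof.
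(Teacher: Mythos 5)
Your argument is correct and is essentially the one the paper intends: the lemma is stated there without proof as "following easily from the functorial property," and your two-sided estimate --- completely contractive inclusion $\tilde{X}\widehat{\otimes}\tilde{Y}\to X\widehat{\otimes}Y$ on one side, the completely contractive map induced by the norm-one projections $P$, $Q$ giving the reverse inequality on the other --- is exactly that functoriality argument, yielding a (complete) isometry with complete, hence closed, image. The only cosmetic remark is that your appeal to \emph{projectivity} of $\widehat{\otimes}$ is unnecessary: the final paragraph of your proposal already closes the argument using functoriality alone.
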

For any normed space $X$,  $X_{1}$  and $X_{1}^{\text{o}}$ will denote the closed unit ball  and  the open unit ball of $X$, respectively. For
normed space $X$ and $Y$, the
normed linear space obtained by equipping $X\ot Y$ with $\|\cdot\|_{\al}$ norm is denoted
by $X \ot_{\al} Y$, and the completion of $X\ot_{\al}Y$ is denoted by $X\ot^{\al}Y$.


%
The second dual $A^{**}$ of  a Banach algebra $A$ possesses two natural
Banach algebra products denoted by $\square $ and $\lozenge$. We briefly recall the definition of these products.
  For $F, G\in A^{**}$ and $f\in A^{*}$, the two multiplications on $A^{**}$ are given by\\
\hspace*{3 cm} $F \square G(f)=F(_{G}f)$ and $F \lozenge G(f)=G(f_{F})$.\\
Here $_{G}f$, $f_{F}$ are the elements  of $A^{*}$ defined by  $_{G}f(b)=G(fb)$ and $f_{F}(b) =F( bf)$ for any $b\in A$,
whereas $bf \in A^{*}$, $f b \in A^{*}$ given by $bf(a)=f(ab)$ and $f b(a)=f(ba)$ for any $a\in A$.

We say that $A$ is   Arens regular if the two products coincide, i.e.
\\
\hspace*{3 cm}	$F \square G= F \lozenge G$ for all $F, G\in A^{**}$. \\
 A functional $f \in A^*$ is said to be wap (weakly almost periodic) on $A$ if
the set $\{af:\;\;a\in A_1\}$ is relatively weakly compact (rwc).  This is equivalent to saying that the bounded operator   $L_f:A\to A^*$, defined
by  $L_f(a)=fa$,
 is weakly compact or the following `Double Limit Criterion'\cite{groth}  holds:
for any two nets $(a_i)_{i}$, $(b_j)_{j}$ in $A_1$,
\[\displaystyle\lim_{i}\displaystyle\lim_{j} f(a_{i}b_{j})=\displaystyle\lim_{j}\displaystyle\lim_{i}f(a_{i}b_{j})\]
whenever both iterated limits exist. Let $wap(A)$ be a set  consisting of all weakly almost periodic  functionals on
$A$.  It is known that $A$ is Arens regular
if and only if $wap(A) = A^*$\cite{pym}. Every operator algebra,
in particular, every $C^*$-algebra, is Arens regular. By (\cite{yood}, Corollary 6.4) and the fact that the Banach space projective tensor norm $(\ot^{\gamma}) $and the operator space projective norm $(\widehat{\ot})$ on tensor product of two $C^*$-algebras is equivalent if and only if one of the $C^*$-algebras is subhomogeneous, it follows that  the Arens regularity of  $A\ot^{\gamma} B$ and $A\widehat{\ot} B$ is equivalent in this case. In this paper, we show that
the result is true in general. In particular, for a compact infinite Hausdorff group $G$, $C(G)\widehat{\otimes} C(G)$ ($C(G)\ot^h C(G)$) is not Arens regular Banach algebras as $C(G)\otimes^{\gamma} C(G)$ is not \cite{error}, $C(G)$ being the commutative $C^*$-algebra of continuous functions on $G$.

A bilinear form $m:A\times B\to \mathbb{C}$ is said to be biregular
 if for any two pairs of sequences $(a_{i})$, $(b_{j})$ in $A_{1}$ and $(c_{i})$, $(d_{j})$ in $B_{1}$,
we have $\displaystyle\lim_{i}\displaystyle\lim_{j} m(a_{i}b_{j}, c_{i}d_{j})=\displaystyle\lim_{j}\displaystyle
\lim_{i}m(a_{i}b_{j}, c_{i}d_{j})$, provided that these limits exist. It was shown in \cite{ulger} that the Banach space projective tensor product $A\ot^{\gamma} B$ is Arens regular if and
only if every bounded bilinear form is biregular. We now prove its analogous result for the operator space projective and the Haagerup tensor
product.

It is known that  $A\widehat{\ot}B$ and $A \ot^h B$ are
 Banach algebras  for  operator algebras $A$ and $B$ with respect to natural multiplication $(a\ot b)(c\ot d)=ac \ot bd$ for all $a,c\in A$ and $b,d\in B$
~\cite{r3}, \cite{blect}. One can use the same technique given therein for completely contractive Banach algebras  so as to obtain the following:

\begin{prop}\label{anin4}
For any completely contractive Banach algebras $A$ and $B$, $A\widehat{\ot} B$
is a completely contractive Banach algebra, and it is a Banach $^*$-algebra under the natural involution provided both $A$
and $B$ have isometric involution.
\end{prop}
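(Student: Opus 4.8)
The plan is to verify that the natural multiplication on $A\oop B$ linearizes to a complete contraction, and then to check directly that the natural involution is an isometric anti-homomorphism; the underlying algebraic axioms (associativity, bilinearity) pass to $A\oop B$ from those of $A$ and $B$ on elementary tensors and extend by density, exactly as in the operator-algebra case cited before the statement. Write $m_A\colon A\oop A\to A$ and $m_B\colon B\oop B\to B$ for the linearized multiplications; by definition of a completely contractive Banach algebra these are complete contractions. The multiplication $M$ on $A\oop B$ is determined on elementary tensors by $M\big((a\ot b)\ot(c\ot d)\big)=ac\ot bd$, so it suffices to show that its linearization
\[
\widetilde{M}\colon (A\oop B)\oop(A\oop B)\longrightarrow A\oop B
\]
is completely contractive.

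The first step is to rearrange the domain using the structural properties of $\oop$ recorded in the introduction. Since $\oop$ is associative and symmetric, the middle swap
\[
\sigma\colon (A\oop B)\oop(A\oop B)\stackrel{\cong}{\longrightarrow}(A\oop A)\oop(B\oop B),\qquad (a\ot b)\ot(c\ot d)\longmapsto (a\ot c)\ot(b\ot d),
\]
is a complete isometry, being a composition of the associativity and flip isomorphisms, each of which is a complete isometry. The second step uses functoriality: because $m_A$ and $m_B$ are complete contractions, their tensor product $m_A\oop m_B\colon (A\oop A)\oop(B\oop B)\to A\oop B$ is a complete contraction, with $\|m_A\oop m_B\|_{cb}\le\|m_A\|_{cb}\|m_B\|_{cb}\le 1$. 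Comparing on elementary tensors shows $\widetilde{M}=(m_A\oop m_B)\circ\sigma$, whence $\|\widetilde{M}\|_{cb}\le 1$, so $A\oop B$ is a completely contractive Banach algebra.

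For the $\ast$-algebra assertion, define the involution on $A\oop B$ as the conjugate-linear extension of $(a\ot b)^{\ast}=a^{\ast}\ot b^{\ast}$. Anti-multiplicativity is immediate on elementary tensors, since $\big((a\ot b)(c\ot d)\big)^{\ast}=(ac)^{\ast}\ot(bd)^{\ast}=c^{\ast}a^{\ast}\ot d^{\ast}b^{\ast}=(c\ot d)^{\ast}(a\ot b)^{\ast}$. To see that it is isometric, I would start from an arbitrary representation $u=\alpha(x\ot y)\beta$ with $\alpha\in M_{1,pq}$, $\beta\in M_{pq,1}$, $x\in M_p(A)$, $y\in M_q(B)$, and read off the representation $u^{\ast}=\beta^{\ast}(x^{\ast}\ot y^{\ast})\alpha^{\ast}$, where $x^{\ast}=(x_{ji}^{\ast})\in M_p(A)$ and $y^{\ast}=(y_{lk}^{\ast})\in M_q(B)$ are the conjugate transposes and $\alpha^{\ast},\beta^{\ast}$ the scalar conjugate transposes (matching the index pairs $(i,k),(j,l)$ correctly is the only point needing care). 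Since the scalar conjugate transpose preserves the norms of $\alpha,\beta$, and the isometric involutions on $A$ and $B$ give $\|x^{\ast}\|_{M_p(A)}=\|x\|_{M_p(A)}$ and $\|y^{\ast}\|_{M_q(B)}=\|y\|_{M_q(B)}$, this yields $\|u^{\ast}\|_{\wedge}\le\|\alpha\|\|x\|\|y\|\|\beta\|$; taking the infimum over all representations of $u$ gives $\|u^{\ast}\|_{\wedge}\le\|u\|_{\wedge}$, and applying the same bound to $u^{\ast}$ gives the reverse inequality. The genuinely routine parts are the algebra computations; the only substantive point is that the isometry of the involution must be applied at every matrix level, that is, the conjugate transpose is required to be isometric on each $M_p(A)$ and $M_q(B)$, which is automatic for operator algebras and $C^{\ast}$-algebras and is what the hypothesis of an isometric involution is meant to supply.
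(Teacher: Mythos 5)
Your argument is correct and is essentially the proof the paper has in mind: the paper omits the details and simply invokes ``the same technique'' as in the operator-algebra case, which is precisely your factorization $\widetilde{M}=(m_A\oop m_B)\circ\sigma$ through the shuffle complete isometry together with functoriality, and the representation-juggling for the involution. Your closing caveat --- that isometry of the involution is really needed at every matrix level, not just on $M_1$ --- is a genuine point the paper glosses over, and flagging it is appropriate rather than a defect of your proof.
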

\begin{prop}\label{arin55}
For completely contractive Banach algebras  $A$ and $B$, $A\widehat{\otimes} B$  is Arens regular if and only if every jointly completely bounded bilinear form $m:A\times B\to \mathbb{C}$  is  biregular.
\end{prop}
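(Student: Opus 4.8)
The plan is to characterize Arens regularity of $A\widehat{\ot}B$ through Grothendieck's double limit criterion and then to translate it, via the duality $(A\widehat{\ot}B)^*\cong JCB(A\times B,\C)$, into a statement about the bilinear forms themselves. Recall that $A\widehat{\ot}B$ is Arens regular precisely when $wap(A\widehat{\ot}B)=(A\widehat{\ot}B)^*$, i.e.\ when every $m\in(A\widehat{\ot}B)^*$ satisfies
\[\lim_s\lim_t m(u_sv_t)=\lim_t\lim_s m(u_sv_t)\]
for all bounded nets $(u_s),(v_t)$ in $A\widehat{\ot}B$ for which both iterated limits exist. This equivalence comes from writing $F,G\in(A\widehat{\ot}B)^{**}$ as weak$^*$-limits of bounded nets (Goldstine) and unwinding the definitions of $\square$ and $\lozenge$, so that $F\square G(m)$ and $F\lozenge G(m)$ are exactly the two iterated limits above. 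Under the identification $(A\widehat{\ot}B)^*\cong JCB(A\times B,\C)$, each $m$ corresponds to a jointly completely bounded form $\widetilde m$ with $m(a\ot c)=\widetilde m(a,c)$ for $a\in A$, $c\in B$.

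For the forward implication, assume $A\widehat{\ot}B$ is Arens regular and fix a jointly completely bounded $\widetilde m$. Given sequences $(a_i),(b_j)\subseteq A_1$ and $(c_i),(d_j)\subseteq B_1$, the elementary tensors $u_i=a_i\ot c_i$ and $v_j=b_j\ot d_j$ lie in the unit ball of $A\widehat{\ot}B$, since $\|a\ot c\|_{\wedge}\le\|a\|\,\|c\|$. Because the product in $A\widehat{\ot}B$ is $(a\ot c)(b\ot d)=ab\ot cd$ (Proposition~\ref{anin4}), we get $m(u_iv_j)=\widetilde m(a_ib_j,c_id_j)$, and the double limit criterion applied to these nets reads
\[\lim_i\lim_j\widetilde m(a_ib_j,c_id_j)=\lim_j\lim_i\widetilde m(a_ib_j,c_id_j),\]
which is exactly the biregularity of $\widetilde m$.

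For the converse, assume every jointly completely bounded form on $A\times B$ is biregular; I would show that every $m$ is weakly almost periodic, equivalently that the double limit criterion holds along all bounded nets in $A\widehat{\ot}B$. The forward computation, read in reverse, already shows that biregularity of $\widetilde m$ is precisely the double limit criterion for $m$ along nets of \emph{elementary} tensors $a_i\ot c_i$ and $b_j\ot d_j$, so the whole difficulty is to upgrade this to arbitrary bounded nets in $(A\widehat{\ot}B)_1$. In the Banach space projective setting of \cite{ulger} the upgrade is automatic: $(A\ot^{\gamma}B)_1=\overline{\mathrm{conv}}(E)$ with $E=\{a\ot c:\|a\|,\|c\|\le1\}$, so relative weak compactness of the slice set $\{m(u\,\cdot\,):u\in E\}$ propagates to the whole ball by Krein--Smulian, while finite convex combinations commute with the operations inside the iterated limits. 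This is exactly where the operator space case is harder, and I expect it to be the main obstacle: the unit ball of $A\widehat{\ot}B$ is not the closed convex hull of $E$ but its matricial convex hull, a typical $u=\alpha(x\ot y)\beta=\sum_{i,j,k,l}\alpha_{ik}\beta_{jl}\,x_{ij}\ot y_{kl}$ being a \emph{linear}, not convex, combination of elementary tensors whose coefficients are controlled only through the operator norms of $\alpha$ and $\beta$, so the Krein--Smulian step no longer closes the argument. To overcome this one must absorb the amplifications $\alpha(\cdot)\beta$ on both factors into the form, producing from $m$ a family of jointly completely bounded \emph{scalar} forms to which the biregularity hypothesis still applies, and then invoke the uniform bound $\sup_n\|\widetilde m_n\|<\infty$ supplied by $(A\widehat{\ot}B)^*\cong JCB(A\times B,\C)$ to interchange the iterated limits for general nets. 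Granting this matricial strengthening of the convexity/Krein--Smulian reduction, every $m$ is weakly almost periodic, whence $wap(A\widehat{\ot}B)=(A\widehat{\ot}B)^*$ and $A\widehat{\ot}B$ is Arens regular.
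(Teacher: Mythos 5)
Your forward implication is exactly the paper's: elementary tensors $a_i\ot c_i$, $b_j\ot d_j$ lie in $(A\widehat{\ot}B)_1$, the multiplication sends $(a\ot c)(b\ot d)$ to $ab\ot cd$, and the double limit criterion for the functional $f$ corresponding to $m$ is precisely biregularity. That half is complete.

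The converse, however, contains a genuine gap, and it is exactly at the point you yourself flag as ``the main obstacle'': you never actually carry out the passage from biregularity of scalar forms on $A\times B$ to the double limit criterion along the true generating set $\{\alpha(x\ot y)\beta:\alpha\in M_{1,n^2},\ \beta\in M_{n^2,1},\ x\in M_n(A)_1,\ y\in M_n(B)_1\}$ of the unit ball; the phrase ``Granting this matricial strengthening'' is where the proof should be. Moreover, the mechanism you sketch --- absorb $\alpha(\cdot)\beta$ into the form to get new jointly completely bounded forms and then invoke $\sup_n\|m_n\|<\infty$ --- does not obviously close the argument, because the hypothesis gives biregularity only for scalar-valued forms on $A\times B$ itself, not for amplified forms whose arguments are matrices over $A$ and $B$. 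The paper's resolution is more pedestrian and stays at the scalar level: it considers $H(f)=\{f.\alpha(a\ot b)\beta\}$, notes $L_f((A\widehat{\ot}B)_1)\subseteq \mathrm{cl}(\mathrm{co}\,H(f))$ so that Krein--Smulyan reduces the problem to relative weak compactness of $H(f)$, tests the latter by \"{U}lger's iterated-limit lemma, and then expands each $\alpha(x\ot y)\beta$ as the finite sum $\sum_{k,l,m,n}\alpha_{1,km}(x_{kl}\ot y_{mn})\beta_{ln,1}$ whose entries $x_{kl}$, $y_{mn}$ lie in $A_1$, $B_1$ and whose scalar coefficients are bounded. Passing to subnets along which all these scalar coefficients converge (and along which the relevant iterated limits of $m(x^i_{kl}x^j_{pq},\,y^i_{mn}y^j_{ot})$ exist), the iterated limits of the finite sums are computed termwise, and biregularity of $m$ applied to each entry-pair interchanges them; a final subnet-versus-net argument recovers $S=T$ for the original nets. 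So your diagnosis of where the difficulty lies is correct, but the decisive step is asserted rather than proved, and the route you propose for it would need the biregularity hypothesis in a matricial form that you are not given.
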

\begin{proof}
Assume that $A\widehat{\otimes} B$ is Arens regular. Let $m:A\times B\to \mathbb{C}$  be a jointly completely bounded bilinear form
such that  for any two pairs of sequences $(a_{i})$, $(b_{j})$ in $A_{1}$ and $(b_{i})$, $(d_{j})$ in $B_{1}$, the iterated limits
$\displaystyle\lim_{i}\displaystyle\lim_{j} m(a_{i}b_{j}, c_{i}d_{j})$ and $\displaystyle\lim_{j}\displaystyle\lim_{i}m(a_{i}b_{j}, c_{i}d_{j})$
exist. Since $(A\widehat{\otimes} B)^* = JCB(A\times B, \mathbb{C})$~\cite{effros}, so there exists $f\in (A\widehat{\otimes} B)^{*}$ such that
$f(a\otimes b)=m(a,b)$ for all $a\in A$,  $b\in B$.  Clearly, both the sequence $(a_{i}\otimes c_{i})$,
$(b_{j}\otimes d_{j})$ belong to the closed unit ball of $A\widehat{\otimes} B$. So $\displaystyle\lim_{i}\displaystyle\lim_{j} f(a_{i}b_{j}\otimes c_{i}d_{j})=\displaystyle\lim_{j}\displaystyle\lim_{i}f(a_{i}b_{j}\otimes c_{i}d_{j})$ by the assumption and hence $\displaystyle\lim_{i}\displaystyle\lim_{j} m(a_{i}b_{j}, c_{i}d_{j})=\displaystyle\lim_{j}\displaystyle\lim_{i}m(a_{i}b_{j}, c_{i}d_{j})$.

For the converse,  let $f$ be a  linear functional on
$A\widehat{\otimes} B$ and $m:A\times B\to \mathbb{C}$ be the corresponding jointly completely bounded bilinear form.  Note that the closed unit ball of $A\widehat{\ot}B$ is the closure of the set $\{\alpha(a\ot b)\beta: \alpha\in M_{1,n^2}, \beta\in M_{n^2,1}, a\in M_n(A)_1, b\in M_n(B)_1, n\in \mathbb{N}\}$, and  $L_{f}((A\widehat{\otimes} B)_{1})\subseteq \text{cl}(\text{co}H(f))$, where $H(f)=\{f .\al( a \otimes b) \beta: \al\in M_{1,n^2}, a\in M_n(A)_1\; \text{and} \;b\in M_n(B)_{1}, n\in \mathbb{N}\}$.
 By Krein-Smulyan theorem, it suffices to show that $H(f)$ is relatively weakly compact. By  (\cite{ulger}, Lemma 3.3), $H(f)$ is
 relatively weakly compact if and only if for any two sequences $(f. \al_i (a_i  \otimes b_i)\bt_i)_{i\in I}$ and
$(\tilde{\al}_j( \tilde{a}_j  \otimes \tilde{b}_j)\tilde{\bt}_j)$, we have $\displaystyle\lim_{i}\displaystyle\lim_{j}  f.\al_i (a_i  \otimes b_i)\bt_i (\tilde{\al}_j(\tilde{a}_j
\otimes \tilde{b}_j) \tilde{\bt}_j)=\displaystyle\lim_{j}\displaystyle\lim_{i} f.\al_i (a_i  \otimes b_i)\bt_i (\tilde{\al}_j(\tilde{a}_j
\otimes \tilde{b}_j) \tilde{\bt}_j)$,
provided that these limits exist; i.e. we  must have $\displaystyle\lim_{i}\displaystyle\lim_{j} f(\ds\sum_{k,l, m,n} \al_{1,km}^i (x_{kl}^i \ot y_{mn}^i) \bt_{ln,1} ^i\ds\sum_{p,q, o,t} \tilde{\al}_{1,po}^j (x_{pq}^j \ot y_{ot}^j) \tilde{\bt}_{qt,1}^j )
$\\ $=\displaystyle\lim_{j}\displaystyle\lim_{i} f(\ds\sum_{k,l, m,n} \al_{1,km}^i (x_{kl}^i \ot y_{mn}^i) \bt_{ln,1} ^i\ds\sum_{p,q, o,t} \tilde{\al}_{1,po}^j (x_{pq}^j \ot y_{ot}^j) \tilde{\bt}_{qt,1}^j )$ provided that these limits exist.  So in terms of jcb bilinear form $m$, we must have\\$\displaystyle\lim_{i}\displaystyle\lim_{j} \ds\sum_{k,l, m,n} \ds\sum_{p,q, o,t} \al_{1,km}^i  \tilde{\al}_{1,po}^j m(x_{kl}^i x_{pq}^j,  y_{mn}^i  y_{ot}^j) \bt_{ln,1} ^i   \tilde{\bt}_{qt,1}^j $ \\ $=\displaystyle\lim_{j}\displaystyle\lim_{i}  \ds\sum_{k,l, m,n} \ds\sum_{p,q, o,t} \al_{1,km}^i  \tilde{\al}_{1,po}^j m(x_{kl}^i x_{pq}^j,  y_{mn}^i  y_{ot}^j) \bt_{ln,1} ^i  \tilde{\bt}_{qt,1}^j  $ provided limits exist. Since each bounded net has a convergent subnet, so corresponding to $(\al_{1,km}^{i})_{i\in I}$ (resp. $(\tilde{\al}_{1,po}^{j})_{j\in J}$) we have  $ (\al_{1,km}^{i_a})_{i_a\in I_1}$ ( resp., $(\tilde{\al}_{1,po}^{j_c})_{j_c\in J_1}$), $I_1 \subseteq I$ (resp., $J_1\subseteq J$). Now if we look at $(\bt_{ln,1} ^{i_a})_{i_a \in I_1}$ (resp. $(\tilde{\bt}_{qt,1}^{j_c})_{j_c \in J_1}$) then this net has  a convergent subnet say  $(\bt_{ln,1} ^{i_b})_{i_b\in I_2}$ (resp. $(\tilde{\bt}_{qt,1}^{j_d})_{j_d\in J_2} $), $I_2\subseteq I_1$ ( $J_2\subseteq J_1$). Continuing in this way, we get   indices such that  $\displaystyle\lim_{j_h}\displaystyle\lim_{i_g} m(x_{kl}^{i_g} x_{pq}^{j_h},  y_{mn}^{i_g}  y_{ot}^{j_h})$ and $\displaystyle\displaystyle\lim_{i_g}\lim_{j_h} m(x_{kl}^{i_g} x_{pq}^{j_h},  y_{mn}^{i_g}  y_{ot}^{j_h})$ exist. Now note that $ \displaystyle\displaystyle\lim_{i_g}\lim_{j_h} \ds\sum_{k,l, m,n} \ds\sum_{p,q, o,t} \al_{1,km}^{i_g}  \tilde{\al}_{1,po}^{j_h}  m(x_{kl}^{i_g} x_{pq}^{j_h},  y_{mn}^{i_g}  y_{ot}^{j_h})  \bt_{ln,1} ^{i_g}  \tilde{\bt}_{qt,1}^{j_h}$ \\$ =     \ds\sum_{k,l, m,n} \ds\sum_{p,q, o,t} \displaystyle\displaystyle\lim_{i_g}\lim_{j_h} \al_{1,km}^{i_g}  \tilde{\al}_{1,po}^{j_h}  m(x_{kl}^{i_g} x_{pq}^{j_h},  y_{mn}^{i_g}  y_{ot}^{j_h})  \bt_{ln,1} ^{i_g}  \tilde{\bt}_{qt,1}^{j_h}$\\$=  \displaystyle\displaystyle\lim_{j_h}\lim_{i_g} \ds\sum_{k,l, m,n} \ds\sum_{p,q, o,t} \al_{1,km}^{i_g}  \tilde{\al}_{1,po}^{j_h}  m(x_{kl}^{i_g} x_{pq}^{j_h},  y_{mn}^{i_g}  y_{ot}^{j_h})  \bt_{ln,1} ^{i_g}  \tilde{\bt}_{qt,1}^{j_h} $ by the biregularity of $m$. Let $S=: \displaystyle\lim_{i}\displaystyle\lim_{j} \ds\sum_{k,l, m,n} \ds\sum_{p,q, o,t} \al_{1,km}^i  \tilde{\al}_{1,po}^j m(x_{kl}^i x_{pq}^j,  y_{mn}^i  y_{ot}^j) \bt_{ln,1} ^i   \tilde{\bt}_{qt,1}^j $ and $T=:  \displaystyle\displaystyle\lim_{j} \lim_{i} \ds\sum_{k,l, m,n} \ds\sum_{p,q, o,t} \al_{1,km}^i  \tilde{\al}_{1,po}^j m(x_{kl}^i x_{pq}^j,  y_{mn}^i  y_{ot}^j) \bt_{ln,1} ^i   \tilde{\bt}_{qt,1}^j $. For each $i$ and $j$, set $S_i= \displaystyle\lim_{j} \ds\sum_{k,l, m,n} \ds\sum_{p,q, o,t} \al_{1,km}^i  \tilde{\al}_{1,po}^j m(x_{kl}^i x_{pq}^j,  y_{mn}^i  y_{ot}^j) \bt_{ln,1} ^i   \tilde{\bt}_{qt,1}^j $ and $T_j=  \displaystyle \lim_{i} \ds\sum_{k,l, m,n} \ds\sum_{p,q, o,t} \al_{1,km}^i  \tilde{\al}_{1,po}^j m(x_{kl}^i x_{pq}^j,  y_{mn}^i  y_{ot}^j) \bt_{ln,1} ^i   \tilde{\bt}_{qt,1}^j $. Obviously, $T_{j_h}= \displaystyle \lim_{i_g} \ds\sum_{k,l, m,n} \ds\sum_{p,q, o,t} \al_{1,km}^{i_g}  \tilde{\al}_{1,po}^{j_h} m(x_{kl}^{i_g} x_{pq}^{j_h},  y_{mn}^{i_g}  y_{ot}^{j_h}) \bt_{ln,1} ^{i_g}   \tilde{\bt}_{qt,1}^{j_h} $, and   $\displaystyle\lim_{i_g}S_{i_g}=
\displaystyle\lim_{j_h}T_{j_h}$. Hence $S=T$.
\end{proof}
The proof of the following proposition is on the similar lines as above by noticing that the closed uint ball of $A\ot^h B$ is the closure of the set $\{  a \odot b: a\in M_{1,n}, b\in M_{n,1}, n\in \mathbb{N}\}$.
\begin{prop}\label{arin5655}
For  operator algebras $A$ and $B$, $A\otimes^{h} B$ is Arens regular if and only if every every completely bounded bilinear form
$m:A\times B\to \C$ is biregular.
\end{prop}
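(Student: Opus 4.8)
The plan is to transcribe the proof of Proposition~\ref{arin55} almost verbatim, replacing its two structural ingredients by their Haagerup analogues. The duality $(A\widehat{\ot}B)^{*}=JCB(A\times B,\C)$ is replaced by $(A\ot^{h}B)^{*}=CB(A\times B,\C)$, so the bilinear forms in play are the completely bounded ones; and the description of the closed unit ball of $A\widehat{\ot}B$ is replaced by the one recorded just before the statement, namely that $(A\ot^{h}B)_{1}$ is the closure of $\{a\odot b:a\in M_{1,n}(A)_{1},\,b\in M_{n,1}(B)_{1},\,n\in\N\}$. For a functional $f$ on $A\ot^{h}B$ one then puts $H(f)=\{f\cdot(a\odot b):a\in M_{1,n}(A)_{1},\,b\in M_{n,1}(B)_{1},\,n\in\N\}$ and observes, exactly as before, that $L_{f}((A\ot^{h}B)_{1})\subseteq\mathrm{cl}(\mathrm{co}\,H(f))$.

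First I would dispatch the easy implication. Suppose $A\ot^{h}B$ is Arens regular and let $m:A\times B\to\C$ be a completely bounded bilinear form for which the two iterated limits appearing in the definition of biregularity exist. Under $CB(A\times B,\C)\cong(A\ot^{h}B)^{*}$ choose $f$ with $f(a\ot b)=m(a,b)$. Given sequences $(a_{i}),(b_{j})$ in $A_{1}$ and $(c_{i}),(d_{j})$ in $B_{1}$, the elementary tensors $a_{i}\ot c_{i}$ and $b_{j}\ot d_{j}$ lie in $(A\ot^{h}B)_{1}$ since $\|a\ot c\|_{h}\le\|a\|\,\|c\|\le1$; Arens regularity then yields $\ds\lim_{i}\lim_{j}f(a_{i}b_{j}\ot c_{i}d_{j})=\ds\lim_{j}\lim_{i}f(a_{i}b_{j}\ot c_{i}d_{j})$, and rewriting $f$ in terms of $m$ gives precisely the biregularity identity for $m$.

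For the converse I would argue by Krein--Smulyan that Arens regularity, equivalently the weak compactness of every $L_{f}$, reduces to the relative weak compactness of $H(f)$, and then invoke (\cite{ulger}, Lemma~3.3): $H(f)$ is relatively weakly compact exactly when for any two sequences $f\cdot(a_{i}\odot b_{i})$ and $\tilde{a}_{j}\odot\tilde{b}_{j}$ from the generating set the two iterated limits of $\big(f\cdot(a_{i}\odot b_{i})\big)(\tilde{a}_{j}\odot\tilde{b}_{j})$ agree whenever they exist. Expanding the Haagerup product, $(a_{i}\odot b_{i})(\tilde{a}_{j}\odot\tilde{b}_{j})=\ds\sum_{k,p}a^{i}_{1k}\tilde{a}^{j}_{1p}\ot b^{i}_{k1}\tilde{b}^{j}_{p1}$, so after applying $f$ the condition becomes $\ds\lim_{i}\lim_{j}\sum_{k,p}m(a^{i}_{1k}\tilde{a}^{j}_{1p},\,b^{i}_{k1}\tilde{b}^{j}_{p1})=\ds\lim_{j}\lim_{i}\sum_{k,p}m(a^{i}_{1k}\tilde{a}^{j}_{1p},\,b^{i}_{k1}\tilde{b}^{j}_{p1})$. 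This is an entrywise consequence of the biregularity of $m$: extracting convergent subnets in the manner of Proposition~\ref{arin55}, each scalar $m(a^{i}_{1k}\tilde{a}^{j}_{1p},b^{i}_{k1}\tilde{b}^{j}_{p1})$ admits interchangeable iterated limits, and the bookkeeping with $S,T,S_{i},T_{j}$ used there then forces $S=T$ and hence the required relative weak compactness.

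The hard part will be the interchange of the iterated limits with the finite sum $\ds\sum_{k,p}$, which is more delicate than in the projective case. There the subnet extraction was performed on the bounded scalar nets $(\al^{i}_{1,km})$ and $(\bt^{i}_{ln,1})$, whereas here there are no such scalar coefficients: the entries $a^{i}_{1k},b^{i}_{k1}$ are operators in $A$ and $B$, and the matrix size $n=n_{i}$ may itself vary along the net, so the number of summands is not fixed. I would resolve this by passing, along suitable subnets, to a common size and to convergent behaviour of each coordinate scalar $m(a^{i}_{1k}\tilde{a}^{j}_{1p},b^{i}_{k1}\tilde{b}^{j}_{p1})$, so that biregularity may be applied coordinatewise and the iterated limits pulled inside the now-fixed finite sum exactly as in the proof of Proposition~\ref{arin55}.
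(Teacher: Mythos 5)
Your proposal is exactly the paper's intended argument: the paper proves this proposition by declaring it to be ``on the similar lines'' as Proposition~\ref{arin55}, with the sole change being the description of $(A\ot^{h}B)_{1}$ as the closure of $\{a\odot b\}$ and the duality $(A\ot^{h}B)^{*}=CB(A\times B,\C)$ in place of the jcb one, which is precisely the substitution you carry out. Your closing remark about the varying matrix size $n_{i}$ and the absence of scalar coefficients $\alpha,\beta$ flags a genuine subtlety that the paper silently glosses over, and your proposed subnet extraction is consistent with how the paper handles the analogous bookkeeping in Proposition~\ref{arin55}.
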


\begin{cor}
Suppose that the algebras $A$ and $B$ are not trivial and  that $A\widehat{\otimes} B$  ($A\otimes^{h} B$) is Arens regular, then $A$ and $ B$ are Arens regular.
\end{cor}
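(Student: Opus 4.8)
The plan is to reduce the Arens regularity of the two factors to the biregularity criteria just established, by testing Propositions~\ref{arin55} and~\ref{arin5655} against elementary (rank-one) bilinear forms. I treat the case $A\widehat{\otimes}B$ in detail; the argument for $A\otimes^{h}B$ is word for word the same once one observes that an elementary tensor $f\otimes g$, with $f\in A^{*}$ and $g\in B^{*}$, defines a completely bounded bilinear form $(a,b)\mapsto f(a)g(b)$, so that Proposition~\ref{arin5655} may be invoked in place of Proposition~\ref{arin55}.

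First I would recall that, for a Banach algebra $A$, Arens regularity is equivalent to $wap(A)=A^{*}$~\cite{pym}, and that $f\in wap(A)$ exactly when $f$ satisfies the double limit criterion on bounded sequences. Hence it suffices to fix an arbitrary $f\in A^{*}$ together with sequences $(a_{i})$, $(b_{j})$ in $A_{1}$ for which both iterated limits $\lim_{i}\lim_{j}f(a_{i}b_{j})$ and $\lim_{j}\lim_{i}f(a_{i}b_{j})$ exist, and then to show that they coincide.

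Next, since $B$ is not trivial, I would pick $c,d\in B_{1}$ with $cd\neq 0$ and, by Hahn--Banach, a functional $g\in B^{*}$ normalised so that $g(cd)=1$. The crucial step is to introduce the bilinear form $m:A\times B\to\C$ given by $m(a,b)=f(a)g(b)$: being a tensor product of two bounded (hence completely bounded, as they are scalar valued) functionals, $m$ is jointly completely bounded with $\|m\|_{jcb}\le\|f\|\,\|g\|$, so $m\in(A\widehat{\otimes}B)^{*}$. Because $A\widehat{\otimes}B$ is assumed Arens regular, Proposition~\ref{arin55} forces $m$ to be biregular. I would then apply biregularity to the sequences $(a_{i})$, $(b_{j})$ in $A_{1}$ paired against the \emph{constant} sequences $c_{i}=c$ and $d_{j}=d$ in $B_{1}$. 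For these choices $m(a_{i}b_{j},c_{i}d_{j})=f(a_{i}b_{j})\,g(cd)=f(a_{i}b_{j})$, so the iterated limits demanded by the definition of biregularity exist precisely because those of $f(a_{i}b_{j})$ do, and biregularity hands back $\lim_{i}\lim_{j}f(a_{i}b_{j})=\lim_{j}\lim_{i}f(a_{i}b_{j})$. Thus $f\in wap(A)$; as $f$ was arbitrary, $A$ is Arens regular. Interchanging the roles of $A$ and $B$ — now fixing a single functional on $A$ and letting $g$ vary over $B^{*}$ — gives the regularity of $B$.

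The only points I would take care to spell out are the reading of the word \emph{non-trivial}, which I interpret as furnishing a nonzero product $cd$ in $B$ (automatic for any nonzero $C^{*}$- or operator algebra) and which is exactly what lets me normalise $g(cd)=1$, and the legitimacy of substituting constant sequences into the sequence-based biregularity condition. Both are routine, so I anticipate no real obstacle: the entire content is the observation that a rank-one test form $f\otimes g$ collapses the biregularity of $m$ onto the double limit criterion for $f$.
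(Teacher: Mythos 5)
Your argument is correct and is essentially the paper's own proof: both fix $f\in A^{*}$, use non-triviality of $B$ to produce $g\in B^{*}$ with $g(cd)=1$, test the rank-one form $f\otimes g$ (which the paper bounds via the injective norm and you bound via $\|\cdot\|_{jcb}$ — the same observation) against constant sequences in $B$, and invoke Proposition~\ref{arin55} (resp.\ \ref{arin5655}) to collapse biregularity onto the double limit criterion for $f$. No gaps.
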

\begin{proof}
Suppose that $f\in A^{*}$ and $(a_{i})$, $(c_{j})$ are sequences in $A_{1}$ such that
$\displaystyle\lim_{i}\displaystyle\lim_{j} f(a_{i}c_{j})$ and
$\displaystyle\lim_{j}\displaystyle\lim_{i}f(a_{i}c_{j})$ exist.
Since $B$ is not trivial, there exist $b\neq 0$, $b'\neq 0$ such that $bb'\neq 0$. Using Hahn Banach theorem, select $g\in B^{*}$ such that $g(bb')=1 $. Now define, $f\otimes g:A\otimes B\to \mathbb{C}$ as $f\otimes g(a\otimes b)=f(a)g(b)$.
Then for $x=\displaystyle\sum_{i=1}^{n}a_{i}\otimes b_{i}\in A\otimes B$, $|f\otimes g(x)|\leq \|f\|\|g\|\|x\|_{\lambda}\leq
\|f\|\|g\|\|x\|_{\wedge}$, so that $f\otimes g$ can be extended to the continuous linear functional
on $A\widehat{\otimes} B$. Let the extension be denoted by $f\widehat{\otimes} g$.  Now, let $b_{i}=b$ and $d_{j}=b'$
 for all $i$, $j$. Then  $\displaystyle\lim_{i}\displaystyle\lim_{j} f\widehat{\otimes}
g(a_{i}c_{j}\otimes b_{i}d_{j})=\displaystyle\lim_{j}\displaystyle\lim_{i}f\widehat{\otimes} g(a_{i}c_{j}\otimes b_{i}d_{j})$,
by Theorem \ref{arin55}. Hence $\displaystyle\lim_{i}\displaystyle\lim_{j} f(a_{i}c_{j})=\displaystyle\lim_{j}\displaystyle\lim_{i}f(a_{i}c_{j})$, so $A$ is Arens regular. Similarly, $B$ is Arens regular.
\end{proof}


We are now ready to present the main results.

\begin{thm}\label{main}
For completely contractive Banach algebras $A$ and $B$,  if $A\widehat{\otimes} B$ is Arens regular then  $A\otimes^{h} B$ is. Conversely,
for $C^{*}$-algebras $A$ and $B$, if  $A\otimes^{h} B$  is Arens regular so is $A\widehat{\otimes} B$.
\end{thm}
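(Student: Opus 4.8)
The plan is to run the whole argument on the level of bilinear forms, converting each Arens-regularity statement into a biregularity statement via Propositions \ref{arin55} and \ref{arin5655}. The one elementary ingredient is the norm estimate $\|u\|_{h}\le\|u\|_{\wedge}$ on $A\ot B$, which makes the identity a contraction $A\widehat{\ot}B\to A\ot^{h}B$ and hence, dually, gives the inclusion $CB(A\times B,\C)\subseteq JCB(A\times B,\C)$: every completely bounded form is jointly completely bounded.

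\emph{Forward implication.} Suppose $A\widehat{\ot}B$ is Arens regular. By Proposition \ref{arin55} every jcb form $m:A\times B\to\C$ is biregular, so in particular every cb form is biregular, and Proposition \ref{arin5655} then yields the Arens regularity of $A\ot^{h}B$. This half uses no special $C^{*}$-structure beyond what makes the two propositions available.

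\emph{Converse, for $C^{*}$-algebras.} Here I would first record that $B\ot^{h}A$ is Arens regular as well. Define $\tau:A\ot B\to B\ot A$ on elementary tensors by $\tau(\sum_k a_k\ot b_k)=\sum_k b_k^{*}\ot a_k^{*}$. Reading off the Haagerup norm for $C^{*}$-algebras, $\|v\|_{h}=\inf\{\,\|\sum_k a_ka_k^{*}\|^{1/2}\|\sum_k b_k^{*}b_k\|^{1/2}: v=\sum_k a_k\ot b_k\,\}$, a term-by-term comparison shows $\tau$ preserves this quantity, so $\tau$ is an isometry, and it is plainly conjugate-linear and anti-multiplicative for the product $(a\ot b)(c\ot d)=ac\ot bd$. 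Thus $\tau$ extends to a conjugate-linear isometric anti-isomorphism $A\ot^{h}B\to B\ot^{h}A$. Such a map transports Arens regularity: writing the nets in $B\ot^{h}A$ as $\tau$-images of bounded nets in $A\ot^{h}B$ and unwinding the double-limit criterion, the order-reversal of the product and the complex conjugation cancel, so $B\ot^{h}A$ is Arens regular. By Proposition \ref{arin5655} applied to the pair $(B,A)$, every cb form on $B\times A$ is then biregular. Now let $m:A\times B\to\C$ be an arbitrary jcb form; by Proposition \ref{arin55} it suffices to prove $m$ biregular. This is where the non-commutative Grothendieck inequality enters: for $C^{*}$-algebras it furnishes a decomposition $m=m_1+m_2$ in which $m_1\in CB(A\times B,\C)$ while the flipped form $m_2^{t}$, defined by $m_2^{t}(b,a)=m_2(a,b)$, lies in $CB(B\times A,\C)$. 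The summand $m_1$ is biregular because $A\ot^{h}B$ is Arens regular; and biregularity of $m_2$ on $A\times B$ is, upon substituting the four sequences, literally biregularity of $m_2^{t}$ on $B\times A$, which holds by the previous step. Finally, biregularity is additive after a routine passage to subnets: given sequences $(a_i),(b_j)$ in $A_1$ and $(c_i),(d_j)$ in $B_1$, the products stay in the unit balls and all scalar values are bounded, so---exactly as in the proof of Proposition \ref{arin55}---one extracts subnets along which the iterated limits of $m_1$ and of $m_2$ exist in both orders, and agreement for each summand forces agreement for $m$. Hence $m$ is biregular and $A\widehat{\ot}B$ is Arens regular.

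\emph{Main obstacle.} The substantive point is the converse, namely the gap between jointly completely bounded and completely bounded forms: on general operator spaces $JCB$ is strictly larger than $CB$, and biregularity need not descend from one class to the other. The non-commutative Grothendieck inequality is precisely the tool that closes this gap for $C^{*}$-algebras, by writing every jcb form as a cb form plus the flip of a cb form. The price is the appearance of the reversed product $B\ot^{h}A$, whose Arens regularity I recover through the involutive flip $\tau$---isometric exactly because the $C^{*}$-structure controls the Haagerup norm. The remaining additivity-of-biregularity bookkeeping is routine and mirrors the subnet extraction already performed for Proposition \ref{arin55}.
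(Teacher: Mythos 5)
Your proof is correct, and the forward implication coincides with the paper's. For the converse, however, you take a genuinely different route. The paper first proves the ``diagonal'' case: if $A\otimes^{h}A$ is Arens regular then so is $A\widehat{\otimes}A$, using the Haagerup--Musat decomposition $f=f_1+f_2$ with $f_1$ and $f_2^{t}$ completely bounded --- here both summands live on $A\times A$, so the flip causes no trouble. It then reduces the general case to the diagonal one by a symmetrization device: from the Arens regularity of $A\otimes^{h}B$ it deduces that of $((A\otimes^{h}B)\oplus(A\otimes^{h}B))_1$, transports this through the interchange homomorphism $\theta$ to $(A\oplus B)_\infty\otimes^{h}(A\oplus B)_\infty$, applies the diagonal case to get Arens regularity of $(A\oplus B)_\infty\widehat{\otimes}(A\oplus B)_\infty$, and finally passes to $A\widehat{\otimes}B$ as a completely complemented subalgebra via the conditional expectations and Lemma \ref{sy6}. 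You instead attack the asymmetry head-on: the conjugate-linear map $\tau(a\otimes b)=b^{*}\otimes a^{*}$ is a Haagerup-isometric anti-isomorphism of $A\otimes^{h}B$ onto $B\otimes^{h}A$ (the $C^{*}$-form of the Haagerup norm makes the isometry a one-line check, and the order reversal and conjugation indeed cancel in the double-limit criterion), so $B\otimes^{h}A$ is Arens regular, and the Grothendieck decomposition can then be applied directly on $A\times B$, with $m_1$ handled by $A\otimes^{h}B$ and $m_2^{t}$ by $B\otimes^{h}A$. Your argument is shorter and avoids the direct-sum machinery, the norm estimate for $\theta$, the Arikan result on $\ell^1$-sums, and Lemma \ref{sy6}; its price is the (standard but necessary) verification that a conjugate-linear isometric anti-isomorphism transports Arens regularity. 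The remaining bookkeeping --- additivity of biregularity after passing to universal subnets along which all iterated limits exist --- is the same in both proofs. No gaps.
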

\begin{pf}
First part follows directly by Proposition \ref{arin55} and  the fact that   for $f\in (A\ot^h B)^*$, $f\circ i(a\ot b)=f(a\ot b)$, $a\in A, b\in B$, where  $i$ is a contractive
homomorphism map from $A\widehat{\otimes} B$ into $A\otimes^{h} B$ (\cite{effros}, Theorem 9.2.1).

For the converse, we first claim  that, for  $C^{*}$-algebra $A $, if $A\otimes^{h} A$  is Arens regular then $A \widehat{\ot} A$ is.
Assume that  $f\in JCB(A\times A,\C)$ be  such that
 for sequences $(a_{i})$, $(b_{j})$ in $A_{1}$ and $(c_{i})$, $(d_{j})$ in $A_{1}$, the
 iterated limits $\displaystyle\lim_{i}\displaystyle\lim_{j} f(a_{i}b_{j}, c_{i}d_{j})$ and $\displaystyle\lim_{j}\displaystyle\lim_{i}f
 (a_{i}b_{j}, c_{i}d_{j})$ exist. By (\cite{musat}, Lemma 3.1), $f$ can be decomposed as $f=f_1+f_2$, where $f_1$ and $f_2$
  are bounded  bilinear forms with $\|f_1\|_{cb}\leq \|f \|_{jcb}$
and $\|f_2^t\|_{cb}\leq \|f \|_{jcb}$, where $f_2^t(b, a) = f_2(a, b)$ for all $a,b\in A$. Since $(a_i)$ is a bounded net, so has a convergent subnet say $(a_{i_k})$. Consider $(c_{i_k})$ which is bounded so has a convergent subnet $(c_{i_l})$. Similarly, $(b_j)$ has a convergent subnet $(b_{j_m})$, and $(d_{j_m})$ has a further convergent subsequence $(d_{j_t})$. As $f_1$ is a bounded bilinear form,
therefore the iterated limits $\displaystyle\lim_{i_l}\displaystyle\lim_{j_t} f_1(a_{i_l}b_{j_t}, c_{i_l}d_{j_t})$
 and  $\displaystyle\lim_{j_t}\displaystyle\lim_{i_l}f_1
 (a_{i_l}b_{j_t}, c_{i_l}d_{j_t})$ exist. Since $f_2=f-f_1$, so we can assume that $\displaystyle\lim_{i_l}\displaystyle\lim_{j_t} f_2
(a_{i_l}b_{j_t}, c_{i_l}d_{j_t})$
 and  $\displaystyle\lim_{j_t}\displaystyle\lim_{i_l}f_2
 (a_{i_l}b_{j_t}, c_{i_l}d_{j_t})$ also exist. Therefore, $\displaystyle\lim_{i_l}\displaystyle\lim_{j_t} f(a_{i_l}b_{j_t}, c_{i_l}d_{j_t})=
\displaystyle\lim_{i_l}\displaystyle\lim_{j_t} f_1(a_{i_l}b_{j_t}, c_{i_l}d_{j_t})+\displaystyle\lim_{i_l}\displaystyle\lim_{j_t}f_2
 (a_{i_l}b_{j_t}, c_{i_l}d_{j_t})$,
 which is further equal to $\displaystyle\lim_{i_l}\displaystyle\lim_{j_t} f_1(a_{i_l}b_{j_t}, c_{i_l}d_{j_t})+\displaystyle\lim_{i_l}
\displaystyle\lim_{j_t}f_2^t
 (c_{i_l}d_{j_t}, a_{i_l}b_{j_t})$. Now by the Arens regularity of $A\otimes^{h} A$ we have $\displaystyle\lim_{i_l}\displaystyle\lim_{j_t}
 f(a_{i_l}b_{j_t}, c_{i_l}d_{j_t})=\displaystyle\lim_{j_t}\displaystyle\lim_{i_l} f(a_{i_l}b_{j_t}, c_{i_l}d_{j_t})$.
Let $\alpha=: \displaystyle\lim_{i}\displaystyle\lim_{j} f(a_{i}b_{j}, c_{i}d_{j})$ and $\beta=:\displaystyle\lim_{j}\displaystyle\lim_{i}f
 (a_{i}b_{j}, c_{i}d_{j})$. For each $i$ and $j$, set $\alpha_i=\displaystyle\lim_{j} f(a_{i}b_{j}, c_{i}d_{j})$
and $\beta_j=\displaystyle\lim_{i}f
 (a_{i}b_{j}, c_{i}d_{j})$. Since $(f(a_{i_l}b_{j}, c_{i_l}d_{j}))_{l}$ is a sequence which converges to $\alpha_{i_l}$,
so every subsequence of it also converges to $\alpha_{i_l}$. Therefore,   we have $\displaystyle\lim_{i_l}\alpha_{i_l}=
\displaystyle\lim_{j_t}\beta_{j_t}$ and hence $\alpha=\beta$.

Now assume that $A\otimes^{h} B$  is Arens regular. So $((A\otimes^{h} B)\oplus (A\otimes^{h} B))_1$  is Arens regular
\cite{Arikan}, where $\oplus_1$ denotes the direct sum of the algebras with $\|(u,v)\|_1=\|u\|_h+\|v\|_h$. We now claim that
$(A\oplus B)_\infty\ot^h (A\oplus B)_\infty$
is Arens regular, where $\oplus_{\infty}$ being the direct product of algebras with $\|(a,b)\|_{\infty}=\max\{\|a\|,\|b\|\}$.
 Consider a  map  $\theta: ((A\otimes^{h} B)\oplus (A\otimes^{h} B))_1 \to
(A\oplus B)_\infty\ot^
h (A\oplus B)_\infty $ defined as $\theta(\ds\sum_{i=1}^\infty a_i\ot b_i, \ds\sum_{i=1}^\infty c_i\ot d_i )=\ds\sum_{i=1}^\infty (a_i, d_i)\ot (c_i,b_i) $,  where  $\{a_i\}_{i=1}^{\infty}$, $\{b_i\}_{i=1}^{\infty}$,
$\{c_i\}_{i=1}^{\infty}$ and $\{d_i\}_{i=1}^{\infty}$  are strongly independent~\cite{sinc}. We show that $\theta$ is a  continuous algebra homomorphism.
 For the continuity of $\theta$, consider  \\
\hspace*{0.1265 cm}
$\|\theta(\ds\sum_{i=1}^\infty a_i\ot b_i, \ds\sum_{i=1}^\infty c_i\ot d_i)\|_h= \|\ds\sum_{i=1}^\infty (a_i,d_i)\ot (c_i,b_i)\|_h$\\
\hspace*{4 cm} $ \leq
\|\ds\sum_{i=1}^\infty (a_ia_i^*, d_i
d_i^*)\|_{\infty}^{1/2} \|\ds\sum_{i=1}^\infty (c_i^*c_i, b_i^*
b_i)\|_{\infty}^{1/2}$\\
\hspace*{4 cm}
$ = \max \{\|\ds\sum_{i=1}^\infty a_ia_i^*\|,\|\ds\sum_{i=1}^\infty d_i
d_i^*\|\}^{1/2} \max \{\|\ds\sum_{i=1}^\infty b_i^*b_i\|,\|\ds\sum_{i=1}^\infty c_i^*
c_i\|\}^{1/2}$\\
\hspace*{4 cm} $ \leq \frac{1}{2} (\max \{\|\ds\sum_{i=1}^\infty a_ia_i^*\|,\|\ds\sum_{i=1}^\infty d_i
d_i^*\|\}+ \max \{\|\ds\sum_{i=1}^\infty b_i^*b_i\|,\|\ds\sum_{i=1}^\infty c_i^*
c_i\|\} )$.
Thus  $\|\ds\sum_{i=1}^\infty (a_i,d_i)\ot (c_i,b_i)\|_h  \leq  \frac{1}{2} (\|\ds\sum_{i=1}^\infty a_ia_i^*\|+\|\ds\sum_{i=1}^\infty d_i
d_i^*\| + \|\ds\sum_{i=1}^\infty b_i^*b_i\|+\|\ds\sum_{i=1}^\infty c_i^*
c_i\|)$. We can rewrite it as $\|\ds\sum_{i=1}^\infty (t^{1/2}a_i, t^{1/2}d_i)\ot (t^{-1/2}b_i,t^{-1/2}c_i)\|_h  \leq  \frac{1}{2}
 (t\|\ds\sum_{i=1}^\infty  a_ia_i^*\|+t \|\ds\sum_{i=1}^\infty d_i
d_i^*\| + t^{-1} \|\ds\sum_{i=1}^\infty b_i^*b_i\|+t^{-1}\|\ds\sum_{i=1}^\infty  c_i^*
c_i\|)$ for any $t>0$. Take infimum over $t>0$ and use the fact that $\ds\inf_{t>0}
\frac{t\alpha +t^{-1}\beta}{2} =\sqrt{\alpha \beta} $, we get  $\|\ds\sum_{i=1}^\infty (a_i,d_i)\ot (b_i,c_i)\|_h  \leq
 \|\ds\sum_{i=1}^\infty  a_ia_i^*\|^{1/2} \|\ds\sum_{i=1}^\infty b_i^*b_i\|^{1/2}+ \|\ds\sum_{i=1}^\infty d_i
d_i^*\|^{1/2} \|\ds\sum_{i=1}^\infty  c_i^*
c_i\|^{1/2}$. Thus  $\|\theta(\ds\sum_{i=1}^\infty a_i\ot b_i, \ds\sum_{i=1}^\infty c_i\ot d_i)\|_h \leq  \|\ds\sum_{i=1}^\infty a_i\ot b_i\|_h
+ \|\ds\sum_{i=1}^\infty c_i\ot d_i\|_h =\|(\ds\sum_{i=1}^\infty a_i\ot b_i, \ds\sum_{i=1}^\infty c_i\ot d_i)\|_1$. Hence $\theta $
 is continuous. One can easily verify that $\theta$ is an algebra homomorphism. Now let $f\in ((A\oplus B)_\infty\ot^h (A\oplus B)_\infty)^*$ be  such that
 for sequences $(a_{i}, b_i)$,  $(\tilde{a_j}, \tilde{b_{j}})$, $(a_{i}', b_i')$ and $(\tilde{a_j}', \tilde{b_{j}}')$ in $((A\oplus B)_\infty)_1$, the
 iterated limits $\displaystyle\lim_{i}\displaystyle\lim_{j} f((a_{i}\tilde{a_j},  b_i \tilde{b_{j}})\ot  (a_{i}'\tilde{a_{j}}', b_{i}' \tilde{b_{j}}'))$ and $\displaystyle\lim_{j}\displaystyle\lim_{i} f((a_{i}\tilde{a_j},  b_i \tilde{b_{j}})\ot  (a_{i}'\tilde{a_j}', b_i' \tilde{b_{j}}'))$ exist. Then the Arens regularity of $((A\otimes^{h} B)\oplus (A\otimes^{h} B))_1$ implies Arens regularity of $(A\oplus B)_\infty\ot^h (A\oplus B)_\infty$ by $f\circ \theta(a_{i} \tilde{a_j} \ot b_i' \tilde{b_{j}}', a_{i}'\tilde{a_j}' \ot b_i \tilde{b_{j}} )=  f((a_{i}\tilde{a_j},  b_i \tilde{b_{j}})\ot  (a_{i}'\tilde{a_{j}}', b_{i}' \tilde{b_{j}}'))$, and Proposition \ref{arin55}. Now consider the natural projections $P: (A\oplus B)_{\infty}\to A$  and $Q: (A\oplus B)_{\infty}\to B$ taking $(a,b)\to a$ and $(a,b)\to b$, respectively. By the definition of max norm, it follows that $P$ and $Q$ are completely bounded with $\|P\|_{cb}\leq 1$ and $\|Q\|_{cb}\leq 1$. Clearly, the above map $P$ is a completely
positive, and $P(a_1(a,b))=P((a_1,0)(a,b))=P(a_1a,0)=a_1a=a_1P(a,b)$ for $a_1,a\in A$ and $b\in B$. Therefore, $P$ is a conditional expectation
from  $(A\oplus B)_{\infty}$ onto $A$, and similarly $Q$ is. Thus $A\widehat{\ot} B$  is a closed subalgebra of $(A\oplus B)_{\infty}
\widehat{\ot} (A\oplus B)_{\infty}$ by Lemma \ref{sy6}, and hence
 $A\widehat{\ot} B$
is   Arens regular.
\end{pf}

For an amenable locally compact Hausdorff groups $G$ and $H$, $A(G)\widehat{\otimes} A(H)$ is completely isometrically isomorphic to  $A(G\times H)$ \cite{effr}, $A(G)\widehat{\otimes} A(H)$ is Arens regular
 if and only if  $G$ and $H$ are  finite by (~\cite{ALAU}, Proposition 3.3). In general, for a locally compact groups $G$ and $H$,
if $A(G)\widehat{\otimes} A(H)$ ($A(G)\ot^h A(H)$)
is Arens regular then $G$ and $H$ are discrete (~\cite{for1}, Theorem 3.2). Furthermore, $A(G)\widehat{\otimes} A(H)$ is not Arens regular if $G$ and $H$ contain an infinite abelian subgroup ~\cite{for2}.
In particular for an amenable locally compact Hausdorff groups $G$ and $H$, $A(G)\otimes^{h} A(H)$ is Arens regular
 if and only if  $G$ and $H$ are  finite.

For operator spaces $X$ and $Y$, we say that a sesquilinear form $\phi $ on $X\times Y$ is jointly completely bounded if $\|\phi\|_{jcb}:=
\sup\{\|[\phi(x_{ij},y_{kl})]\|
: \|[x_{ij}]\|_{M_n(X)}\leq 1, \|[y_{ij}]_{M_n(\overline{Y})}\|\leq 1\} < \infty$, where $\overline{Y}$ is the conjugate
of the operator space $Y$. It is known that,
for a  $C^*$-algebra $A$,  the complex conjugate $\overline{A} $ of operator space $A$  and the opposite $C^*$-algebra $A^{op}$ are $C^*$-isomorphic. Therefore,
for $C^*$-algebras $A$ and $B$, a sesquilinear form $\phi $ on $A\times B$ is jointly completely bounded if $\|\phi\|_{jcb}=
\sup\{\|[\phi(x_{ij},y_{kl})]\|
: \|[x_{ij}]\|_{M_n(A)}\leq 1, \|[y_{ij}]_{M_n(B^{op})}\|\leq 1\}< \infty$. For more details about the complex conjugate of an operator space
and the opposite operator spaces, the reader
may refer to \cite{pisi}.

\begin{lem}\label{ardd12}
For $C^*$-algebras $A$ and $B$, $A\widehat{\otimes} B$ is Arens regular if and only if
$A\widehat{\otimes} B^{op}$ is.
\end{lem}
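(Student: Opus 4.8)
The plan is to reduce both Arens regularity statements to biregularity conditions via Proposition \ref{arin55}, and then to transfer biregularity across the correspondence between bilinear forms on $A\times B$ and on $A\times B^{op}$ that the involution provides. By Proposition \ref{arin55}, $A\widehat{\otimes} B$ is Arens regular if and only if every jcb bilinear form $m\colon A\times B\to\mathbb{C}$ is biregular; applying the same proposition with $B$ replaced by the $C^*$-algebra $B^{op}$, the algebra $A\widehat{\otimes} B^{op}$ is Arens regular if and only if every jcb bilinear form on $A\times B^{op}$ is biregular. Hence it suffices to match these two families of biregularity conditions.

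First I would set up a dictionary between the two dual spaces. Using the identification of $(A\widehat{\otimes} B^{op})^*$ with the jcb sesquilinear forms on $A\times B$ recorded in the discussion preceding the statement (which rests on the $C^*$-isomorphism $\overline{B}\cong B^{op}$), I would associate to a jcb bilinear form $m$ on $A\times B$ the sesquilinear form $\phi(a,b):=m(a,b^*)$. The assignment $m\mapsto\phi$ is a bijection, and that it preserves the jcb norm --- so that it carries $(A\widehat{\otimes} B)^*$ onto $(A\widehat{\otimes} B^{op})^*$ --- follows from the $C^*$-identity $\|[b_{ij}^*]\|_{M_n(B)}=\|[b_{ji}]\|_{M_n(B)}$ together with the convention, used before the statement, that the $B$-matrices entering the sesquilinear jcb norm are measured in $M_n(B^{op})$. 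This is the step that encodes the passage from $B$ to $B^{op}$ at the level of operator-space structure.

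Next I would transfer biregularity across this dictionary. Testing Arens regularity of $A\widehat{\otimes} B^{op}$ amounts to testing biregularity of the associated forms with the second-slot product reversed, $c_i\ast_{op}d_j=d_jc_i$. Replacing the $B$-sequences $(c_i),(d_j)$ by $(c_i^*),(d_j^*)$ --- an isometric bijection of $B_1$ onto itself under which a net converges if and only if its image does --- and using $(d_jc_i)^*=c_i^*d_j^*$, the reversed products become ordinary products in $B$, and the condition turns into the biregularity of the corresponding $m$ on $A\times B$ evaluated at $(a_i),(b_j)$ in $A_1$ and $(c_i^*),(d_j^*)$ in $B_1$. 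Since the starred sequences again exhaust $B_1$, the two universally quantified biregularity statements coincide, and Proposition \ref{arin55} then yields the asserted equivalence of Arens regularity.

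The step I expect to be the main obstacle is the norm-preservation in the dictionary: checking cleanly that $m\mapsto m(\,\cdot\,,(\cdot)^*)$ carries jcb bilinear forms on $A\times B$ bijectively and isometrically onto the functionals on $A\widehat{\otimes} B^{op}$. This is where the conjugate-linearity of the involution and the differing operator-space conventions for $B$, $B^{op}$ and $\overline{B}$ must be reconciled, and it is the reason the identification $\overline{B}\cong B^{op}$ (rather than a naive identity map between bilinear forms) is needed to align the matrix norms; once that alignment is in place, the remaining biregularity argument is a routine manipulation of iterated limits.
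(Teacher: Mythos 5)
Your proposal follows essentially the same route as the paper's proof: both reduce Arens regularity of $A\widehat{\otimes} B$ and $A\widehat{\otimes} B^{op}$ to biregularity via Proposition \ref{arin55}, identify $(A\widehat{\otimes} B^{op})^*$ with jcb sesquilinear forms on $A\times B$ through $\overline{B}\cong B^{op}$, pass between the two classes of forms by $\psi(a,b)=\phi(a,b^*)$ with the same matrix-norm check in $M_n(B^{op})$, and transfer the iterated-limit conditions using $(d_jc_i)^*=c_i^*d_j^*$ together with the fact that the involution is a bijection of $B_1$. The argument is correct and matches the paper's, with your write-up being somewhat more explicit about the reversal of the second-slot product.
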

\begin{pf}
Note that every  jointly completely bounded  bilinear form on $A\times B$ is
biregular if and only if  every sesquilinear jointly completely bounded  form on $A\times B$ is biregular. Indeed, suppose that  every  jointly completely bounded  bilinear form on $A\times B$ is
biregular, and take $\phi$ to be a sesquilinear jointly completely bounded bilinear form on $A\times B$ such that  for sequences $(a_{i})$, $(b_{j})$ in $A_{1}$ and $(c_{i})$, $(d_{j})$ in $B_{1}$, the
 iterated limits $\displaystyle\lim_{i}\displaystyle\lim_{j} \phi(a_{i}b_{j}, c_{i}d_{j})$ and
$\displaystyle\lim_{j}\displaystyle\lim_{i}\phi(a_{i}b_{j}, c_{i}d_{j})$ exist. Now define $\psi: A\times B \to \C$
as $\psi(a,b)=\phi(a,b^*)$ for all $a\in A$ and $b\in B$. Clearly, $\psi$ is a bilinear form. Consider $\psi_n([a_{ij}],[b_{kl}])=  (\psi(a_{ij},b_{kl}))=
(\phi(a_{ij},b_{kl}^*))=\phi_n([a_{ij}],[b_{kl}^*])$. Therefore,  \\
\hspace*{1 cm}$\|\psi_n([a_{ij}],[b_{kl}])\|=\|\phi_n([a_{ij}],[b_{kl}^*])\|\leq
\|\phi\|_{jcb}\|[a_{ij}]\|_{M_n(A)}\|[b_{kl}^*]\|_{M_n(B^{op})}$\\
\hspace*{3.5 cm} $\leq
\|\phi\|_{jcb}\|[a_{ij}]\|_{M_n(A)}\|[b_{lk}]^{*}\|_{M_n(B^{op})}$\\
\hspace*{3.6 cm}$= \|\phi\|_{jcb}\|[a_{ij}]\|_{M_n(A)}\|[b_{lk}]\|_{M_n(B^{op})}$,\\
\hspace*{3.5 cm} $=\|\phi\|_{jcb}\|[a_{ij}]\|_{M_n(A)}\|[b_{kl}]\|_{M_n(B)}$. \\
Thus $\psi$ is a jointly completely bounded bilinear form. Further, note that $\psi(a_ib_j, d_j^*c_i^*)=\phi(a_ib_j, c_id_j)$, and hence the claim.

In fact, from Proposition \ref{arin55}, this observation leads to the fact that  $A\widehat{\otimes} B$ is Arens regular if and only if
$A\widehat{\otimes} B^{op}$ is.
\end{pf}

\begin{thm}\label{main2}
For completely contractive Banach algebras  $A$ and $B$, if $A\otimes^{\gamma} B$ is Arens regular then
$A\widehat{\otimes} B$ is. Conversely, for $C^*$-algebras $A$ and $B$,
 if $A\widehat{\otimes} B$ is Arens regular then $A\otimes^{\gamma} B$ is.
\end{thm}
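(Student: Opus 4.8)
The plan is to convert both implications into statements about biregularity of bilinear forms, using \cite{ulger} (Arens regularity of $A\otimes^{\gamma}B$ is equivalent to biregularity of every \emph{bounded} bilinear form on $A\times B$) together with Proposition \ref{arin55} (Arens regularity of $A\widehat{\otimes}B$ is equivalent to biregularity of every \emph{jointly completely bounded} bilinear form). The first implication then needs no $C^*$-structure: taking the first amplification shows $\|m\|\le\|m\|_{jcb}$, so every jcb form is bounded. Hence if $A\otimes^{\gamma}B$ is Arens regular, every bounded—and in particular every jcb—bilinear form on $A\times B$ is biregular, and Proposition \ref{arin55} yields that $A\widehat{\otimes}B$ is Arens regular.

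For the converse I would assume $A\widehat{\otimes}B$ is Arens regular, so by Proposition \ref{arin55} every jcb bilinear form on $A\times B$ is biregular. Applying Lemma \ref{ardd12} (and its $A\leftrightarrow B$-symmetric analogue, valid since $\widehat{\otimes}$ is symmetric), the algebras $A\widehat{\otimes}B^{op}$, $A^{op}\widehat{\otimes}B$ and $A^{op}\widehat{\otimes}B^{op}$ are also Arens regular, so that every jcb bilinear form on each of $A\times B$, $A\times B^{op}$, $A^{op}\times B$, $A^{op}\times B^{op}$ is biregular. Now let $u$ be an arbitrary bounded bilinear form on $A\times B$ and fix sequences $(a_i),(b_j)\subset A_1$ and $(c_i),(d_j)\subset B_1$ for which both iterated limits of $u(a_ib_j,c_id_j)$ exist; the goal is to show they agree.

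Here the non-commutative Grothendieck inequality enters: it provides states $f_1,f_2$ on $A$ and $g_1,g_2$ on $B$ with $|u(a,b)|\le\|u\|\,(f_1(a^*a)+f_2(aa^*))^{1/2}(g_1(b^*b)+g_2(bb^*))^{1/2}$. Running the GNS constructions of $f_1,f_2,g_1,g_2$ produces $*$-representations $\pi_1,\pi_2$ of $A$ and $\sigma_1,\sigma_2$ of $B$, cyclic vectors $\xi_1,\xi_2,\eta_1,\eta_2$, and a bounded operator $T$ between $H_1\oplus H_2$ and $K_1\oplus K_2$ such that $u(a,b)=\langle T(\pi_1(a)\xi_1\oplus\pi_2(a^*)\xi_2),\ \sigma_1(b)\eta_1\oplus\sigma_2(b^*)\eta_2\rangle$. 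Writing $T=[T_{mn}]$ blockwise splits $u=\sum_{m,n\in\{1,2\}}\psi_{mn}$, where each $\psi_{mn}(a,b)=\langle T_{mn}\,\pi_n(a^{(\ast)})\xi_n,\ \sigma_m(b^{(\ast)})\eta_m\rangle$ factors through column Hilbert spaces via the completely contractive maps $a\mapsto\pi_n(a^{(\ast)})\xi_n$ and $b\mapsto\sigma_m(b^{(\ast)})\eta_m$, and is therefore jointly completely bounded: on $A\times B$ when $n=m=1$, and on the appropriate opposite algebra(s) when an adjoint, hence the conjugate/opposite operator space structure, appears ($n=2$ or $m=2$). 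By the previous paragraph each $\psi_{mn}$ is biregular, so extracting successive convergent subnets (exactly as in the proof of Proposition \ref{arin55}) along which all four iterated double limits exist and using additivity of limits gives $\lim_i\lim_j u(a_ib_j,c_id_j)=\sum_{m,n}\lim_i\lim_j\psi_{mn}=\sum_{m,n}\lim_j\lim_i\psi_{mn}=\lim_j\lim_i u(a_ib_j,c_id_j)$; as the iterated limits of $u$ were assumed to exist, they coincide with these common subnet values. Thus every bounded bilinear form on $A\times B$ is biregular and $A\otimes^{\gamma}B$ is Arens regular by \cite{ulger}.

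I expect the main obstacle to be the bookkeeping forced by the two adjoints in the factorization. The blocks $\psi_{12},\psi_{21},\psi_{22}$ are generally \emph{not} jcb on $A\times B$ itself—consistent with the fact that $\widehat{\otimes}$ and $\otimes^{\gamma}$ are inequivalent on $C^*\otimes C^*$ unless a factor is subhomogeneous, so a bounded form need not be jcb—and become jcb only after passing to $A^{op}$ and/or $B^{op}$, where the multiplication, and hence the very products $a_ib_j$ and $c_id_j$ appearing in the definition of biregularity, is reversed. Reconciling this order reversal with biregularity is precisely what Lemma \ref{ardd12} (jcb bilinear biregular $\Leftrightarrow$ jcb sesquilinear biregular) is designed to absorb, and the delicate step is to check that the reversed-product biregularity statements on the opposite algebras are exactly those delivered by Lemma \ref{ardd12} and its symmetric analogue.
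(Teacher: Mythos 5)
Your proof follows essentially the same route as the paper's: both directions are reduced to biregularity statements via Proposition \ref{arin55} and \"{U}lger's characterization of Arens regularity of $A\otimes^{\gamma}B$, and the converse splits a bounded bilinear form, via the non-commutative Grothendieck inequality, into jointly completely bounded pieces on $A$, $B$ and their opposites, after which Lemma \ref{ardd12} and the subnet argument of Theorem \ref{main} finish the job. The only real difference is that you re-derive the decomposition by hand as four GNS blocks (with a harmless bookkeeping slip: by the Effros--Ruan criterion the block that is jcb on $A\times B$ itself is the one dominated by $f_2(aa^*)^{1/2}g_1(b^*b)^{1/2}$, i.e.\ the block carrying the adjoint in the \emph{first} variable, while the adjoint-free block $f_1(a^*a)^{1/2}g_1(b^*b)^{1/2}$ is the one forcing $A^{op}$ --- immaterial here since all four opposite combinations are covered), whereas the paper simply cites the two-term decomposition $m=m_1+m_2$ of Pisier--Shlyakhtenko, with $m_1$ jcb on $A\times B$ and $m_2$ jcb on $A\times B^{op}$, so that only Lemma \ref{ardd12} as stated is needed.
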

\begin{proof}
For $f\in (A\widehat{\ot} B)^*$, $f\circ i \in  (A\otimes^{\gamma} B)^*$ and $f(a\ot b)=f(a\ot b)$, $a\in A, b\in B$, where $i$ is a  canonical homomorphism from $A\otimes^{\gamma}B$  into $A\widehat{\otimes} B$, the Arens regularity of
$A\otimes^{\gamma} B$ implies that of  $A\widehat{\otimes} B$ using Proposition \ref{arin55} and (\cite{ulger}, Theorem 3.4).

For the converse, let  $m:A\times B\to \mathbb{C}$  be a bounded bilinear form such that  for sequences $(a_{i})$, $(b_{j})$ in $A_{1}$ and $(c_{i})$, $(d_{j})$ in $B_{1}$, the
 iterated limits $\displaystyle\lim_{i}\displaystyle\lim_{j} m(a_{i}b_{j}, c_{i}d_{j})$ and
$\displaystyle\lim_{j}\displaystyle\lim_{i}m (a_{i}b_{j}, c_{i}d_{j})$ exist. By  the non-commutative version of Grothendieck's inequality to
the setting of bounded bilinear forms on $C^*$-algebras, $m$ can be decomposed as $m=m_1+m_2$, where $m_1$ and $m_2$ are jointly completely
bounded bilinear forms on $A\times B$ and $A\times B^{op}$ respectively~\cite{shl}. Using Lemma \ref{ardd12} and  a similar argument  as in Theorem \ref{main}, we obtain the required result.
\end{proof}
From all the results above, the Arens regularity of all Banach algebras $A\ot^s B$, the Schur tensor product of $C^*$-algebras $A$ and $B$ \cite{vandee4},
 $A\ot^h B$, $A\ot^\gamma B$  and $A\widehat{\ot} B$ are equivalent.

By (\cite{ulgerr}, Theorem 7.6) and the above results, we have
\begin{cor}\label{arin6}
Let $A$ be a  unital $C^*$-algebra such that the von Neumann algebra
$A^{**}$ is not finite. Then the algebra $A\widehat{\otimes}A$ (resp. $A\otimes^{h}A$ ) is not Arens regular.
In particular, $A\widehat{\otimes}A^{**}$ (resp. $A\otimes^{h}A^{**}$) and $A^{**}\widehat{\otimes}A^{**}$ (resp. $A^{**}\otimes^{h}A^{**}$) are not Arens regular.
\end{cor}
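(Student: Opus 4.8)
The plan is to reduce the whole statement to the single external input from \cite{ulgerr} and then transport the resulting failure of Arens regularity across the tensor norms and up to the biduals using the equivalences already established. First I would invoke (\cite{ulgerr}, Theorem 7.6): since $A$ is unital and the von Neumann algebra $A^{**}$ is not finite, the Banach algebra $A\ot^\gamma A$ is not Arens regular. Because $A$ is a $C^*$-algebra, Theorem \ref{main2} gives that $A\ot^\gamma A$ and $A\widehat{\ot} A$ are Arens regular simultaneously, so $A\widehat{\ot} A$ is not Arens regular; and Theorem \ref{main} does the same for $A\widehat{\ot} A$ and $A\ot^h A$. This settles the first assertion (the $A\widehat{\ot} A$ and $A\ot^h A$ statements).

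For the biduals, the clean case is $A^{**}\widehat{\ot} A^{**}$, which I would obtain by simply applying the first assertion with $A$ replaced by the unital $C^*$-algebra $A^{**}$; the only point to verify is that $(A^{**})^{**}$ is again not finite. For this I would use that a von Neumann algebra $M$ is a central summand of its bidual: the map $M^{**}\to M$ adjoint to the inclusion $M_*\hookrightarrow M^*$ is a surjective normal $^*$-homomorphism (it is a left inverse of the canonical embedding $M\to M^{**}$), so $M\cong zM^{**}$ for a central projection $z$; were $M^{**}$ finite, the corner $zM^{**}\cong M$ would be finite too. Applying this with $M=A^{**}$ shows $(A^{**})^{**}$ is not finite, whence the first assertion yields that $A^{**}\widehat{\ot} A^{**}$ (and, by Theorem \ref{main}, $A^{**}\ot^h A^{**}$) is not Arens regular.

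The remaining, genuinely delicate, case is the mixed product $A\widehat{\ot} A^{**}$, which is not of the diagonal form $C\widehat{\ot} C$ and for which Lemma \ref{sy6} is unavailable, since $A$ is in general not completely complemented in $A^{**}$ (e.g. $c_0$ in $\ell^\infty$, or $K(H)$ in $B(H)$). My approach is to show directly, via Proposition \ref{arin55}, that Arens regularity of $A\widehat{\ot} A^{**}$ would descend to $A\widehat{\ot} A$. Given a jointly completely bounded bilinear form $m$ on $A\times A$, I would extend it in the second variable by $\tilde m(a,F)=F\big(m(a,\cdot)\big)$ for $F\in A^{**}$ (the weak$^*$ extension), which restricts to $m$ on $A\times A$. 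The key point, and the step I expect to be the main obstacle, is that $\tilde m$ is again jointly completely bounded with $\|\tilde m\|_{jcb}\le\|m\|_{jcb}$: I would realize each entry $\tilde m(a_{ij},F_{kl})$ as a weak$^*$ limit $\lim_\lambda m(a_{ij},b^\lambda_{kl})$ along a net $[b^\lambda_{kl}]\in M_n(A)_1$ converging to $[F_{kl}]\in M_n(A^{**})_1$ (Goldstine density, using $M_n(A)^{**}=M_n(A^{**})$), and then pass the uniform bound $\|(m(a_{ij},b^\lambda_{kl}))\|\le\|m\|_{jcb}\|[a_{ij}]\|\,\|[b^\lambda_{kl}]\|$ through the limit, the matrix norm being continuous in finite dimensions. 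Once $\tilde m$ is known to be jcb, Arens regularity of $A\widehat{\ot} A^{**}$ forces $\tilde m$ to be biregular by Proposition \ref{arin55}; specializing the biregularity identity to sequences drawn from $A_1\subseteq (A^{**})_1$ shows $m$ is biregular, so $A\widehat{\ot} A$ would be Arens regular, contradicting the first part. Hence $A\widehat{\ot} A^{**}$ is not Arens regular, and Theorem \ref{main} transfers this to $A\ot^h A^{**}$.
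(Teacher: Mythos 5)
Your argument is correct, and for the first assertion it is exactly the paper's route: invoke (\cite{ulgerr}, Theorem 7.6) to get that $A\otimes^{\gamma}A$ is not Arens regular, then transfer the failure to $A\widehat{\otimes}A$ via Theorem \ref{main2} and to $A\otimes^{h}A$ via Theorem \ref{main}. Where you diverge is in the ``in particular'' clause. The paper's proof is literally the one line ``by (\cite{ulgerr}, Theorem 7.6) and the above results'': the cited theorem of Lau and \"{U}lger already asserts the non-regularity of $A\otimes^{\gamma}A^{**}$ and $A^{**}\otimes^{\gamma}A^{**}$ as part of its own ``in particular'' statement, so the paper again only needs Theorems \ref{main} and \ref{main2} (applied to the pairs of $C^*$-algebras $(A,A^{**})$ and $(A^{**},A^{**})$) to transport these to $\widehat{\otimes}$ and $\otimes^{h}$. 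You instead rederive both bidual cases from the diagonal case: for $A^{**}\widehat{\otimes}A^{**}$ by the central-summand observation that $(A^{**})^{**}$ cannot be finite (correct, and cleaner than trying to embed $A\widehat{\otimes}A$ into $A^{**}\widehat{\otimes}A^{**}$, which would raise a genuine isometry issue since $\widehat{\otimes}$ is not injective), and for the mixed product $A\widehat{\otimes}A^{**}$ by the weak$^*$ extension $\tilde m(a,F)=F(m(a,\cdot))$ of a jcb form, with the jcb bound passed through Goldstine density in $M_n(A)^{**}\cong M_n(A^{**})$ and Proposition \ref{arin55}; this step is also sound, noting that the Arens product on $A^{**}$ restricts to the product of $A$ so biregularity of $\tilde m$ restricts to biregularity of $m$. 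In short: your proof is more self-contained than the paper's, at the cost of reproving facts the citation already supplies.
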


In particular, for an infinite dimensional Hilbert space $H$, $B(H)\widehat{\otimes} B(H)$, $B(H)\widehat{\otimes} K(H)$ and $K(H)\widehat{\otimes} B(H)$ are not Arens regular
and so $B(H)\widehat{\otimes} K(H)+K(H)\widehat{\otimes} B(H)$ is not Arens regular by using
 (\cite{yood}, Corollary
6.3). Also, by (\cite{ulgerr}, Proposition 7.7 and Lemma 7.8), $K(H)\widehat{\otimes} K(H)$ is not Arens regular.

%

%

Recall that an operator space $X$ is exact if and only if it is locally embeds into a nuclear $C^*$-algebra (say $A$),
 i.e. there is  a constant $C$ such that
for any finite dimensional $E\subseteq  X$, there is  a subspace $\tilde{E} \subseteq A$  and an isomorphism $u: E\to \tilde{E}$ with
$\|u\|_{cb}\|u^{-1}\|_{cb}\leq
C$. Now using this definition  of exact operator space and the fact that direct sum of two nuclear $C^*$-algebras is nuclear
if and only if each one of them is, one can easily verify
that if $V$ and $W$ are exact operator algebras then $V\oplus W$ with sup-norm  is also exact.

Using the same idea as in Theorem \ref{main} and appealing to the non-commutative version of Grothendieck's inequality to
the setting of jointly completely bounded bilinear forms on exact operator spaces (\cite{shl}, Theorem 0.4), we have
\begin{prop}\label{main3}
For exact operator algebras $V$ and $W$, the Arens regularity of   $V\widehat{\otimes} W$ and $V\ot^h W$ is equivalent.
\end{prop}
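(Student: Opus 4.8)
The plan is to run the two implications of Theorem \ref{main} again, now over exact operator algebras, substituting the exact-operator-space Grothendieck decomposition of \cite{shl} for the $C^*$-algebraic decomposition of Musat used there. The implication that Arens regularity of $V\widehat{\otimes} W$ forces that of $V\otimes^h W$ is free: exact operator algebras are completely contractive Banach algebras, so this is exactly the first half of Theorem \ref{main}, resting only on the contractive homomorphism $V\widehat{\otimes} W\to V\otimes^h W$ together with Propositions \ref{arin55} and \ref{arin5655}. Exactness plays no role in this direction.

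For the converse I would first settle the diagonal case $V=W$, imitating the internal claim of Theorem \ref{main}. Given a jcb bilinear form $f$ on $V\times V$ whose two relevant iterated limits exist, I would apply (\cite{shl}, Theorem 0.4) to split $f=f_1+f_2$ with $f_1$ and $f_2^t$ completely bounded; this is the one and only place where exactness of $V$ is used. After passing to a nested sequence of subnets so that all iterated limits attached to $f_1$ and to $f_2$ exist, the Arens regularity of $V\otimes^h V$ makes both $f_1$ and $f_2$ biregular through Proposition \ref{arin5655}: directly for $f_1$, and for $f_2$ by reading $f_2(a_ib_j,c_id_j)=f_2^t(c_id_j,a_ib_j)$ and applying biregularity of the cb form $f_2^t$. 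Thus $f=f_1+f_2$ is biregular, and Proposition \ref{arin55} yields that $V\widehat{\otimes} V$ is Arens regular.

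For general exact $V$ and $W$ I would bootstrap exactly as in Theorem \ref{main}. Arens regularity of $V\otimes^h W$ gives that of $((V\otimes^h W)\oplus(V\otimes^h W))_1$, and the contractive algebra homomorphism $\theta$ transports this to Arens regularity of $(V\oplus W)_\infty\otimes^h(V\oplus W)_\infty$; the Haagerup-norm estimate behind the continuity of $\theta$ uses only the row and column norms computed in the ambient $B(H)$, so it survives the loss of positivity in passing from $C^*$-algebras to operator algebras. Since $V\oplus W$ with the sup-norm is again exact, as recorded just before the statement, the diagonal case applies and $(V\oplus W)_\infty\widehat{\otimes}(V\oplus W)_\infty$ is Arens regular. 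Finally the coordinate projections $P,Q$ are completely contractive idempotent homomorphisms, so $V\widehat{\otimes} W$ is a closed subalgebra of $(V\oplus W)_\infty\widehat{\otimes}(V\oplus W)_\infty$ by Lemma \ref{sy6}, and Arens regularity descends to closed subalgebras.

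The step I expect to be the crux is the decomposition: one must verify that (\cite{shl}, Theorem 0.4) genuinely delivers a splitting $f=f_1+f_2$ of a jcb form into a cb form plus the transpose of a cb form, with finite cb norms, since this exact-space statement is the sole substitute for Musat's lemma and the only ingredient requiring exactness. A secondary point needing attention is that here $P$ and $Q$ are merely completely contractive projections rather than conditional expectations as in the $C^*$ case; one should confirm they still satisfy the hypotheses of Lemma \ref{sy6} (cb projections of cb norm one) and remain algebra homomorphisms, so that the image is a subalgebra and not only a subspace.
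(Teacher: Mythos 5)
Your proposal is correct and follows exactly the route the paper takes: the authors give only a one-line proof stating that the argument of Theorem \ref{main} carries over once the Haagerup--Musat decomposition is replaced by the jcb-form Grothendieck decomposition for exact operator spaces (\cite{shl}, Theorem 0.4). You have merely filled in the details (the diagonal case, the $\theta$ estimate without positivity, and the status of $P,Q$) that the paper leaves implicit.
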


\begin{prop}
Let $A$ and $B$ be any operator algebras such that every weakly compact operator from $A$ to $B^*$ is compact. Then $B\ot^h A$
is Arens regular. Conversely, assume that, for each $a \in A$ and $b \in B$, one of the left multiplication operators
$_a\tau$ or $_b\tau$ is compact and the other is weakly compact. Then every weakly compact operator from $A$ to $B^*$ is compact.
\end{prop}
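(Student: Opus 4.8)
The plan is to prove the two implications separately, in both cases routing through Proposition~\ref{arin5655}, which says that $B\otimes^h A$ is Arens regular precisely when every completely bounded bilinear form $m\colon B\times A\to\C$ is biregular. To pass between biregularity and operators $A\to B^*$, I would attach to such an $m$ the functional $f\in(B\otimes^h A)^*$ with $f(b\ot a)=m(b,a)$ together with the operator $T_m\colon A\to B^*$, $\langle T_m a,b\rangle=m(b,a)$, and record the elementary identity
\[
m(a_ib_j,\,c_id_j)=\langle T_m(c_id_j),\,a_ib_j\rangle=f\big((a_i\ot c_i)(b_j\ot d_j)\big),
\]
valid for $a_i,b_j\in B_1$ and $c_i,d_j\in A_1$. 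Thus biregularity of $m$ is exactly the assertion that the two iterated limits of $\langle T_m(c_id_j),a_ib_j\rangle$ agree, and, just as in the proof of Proposition~\ref{arin5655}, it suffices to test this along elementary tensors, since the unit ball of $B\otimes^h A$ is the closure of the set of $a\odot b$.

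For the forward implication I would use that a completely bounded bilinear form on operator spaces factors through a Hilbert space (\cite{pisi}): from the representation $m(b,a)=\langle\phi(b)\psi(a)\xi,\eta\rangle$ one gets a Hilbert space $K$ and bounded maps $W\colon A\to K$, $U\colon K\to B^*$ with $T_m=UW$, where $Wa=\psi(a)\xi$. Since $K$ is reflexive, $W(A_1)$ is relatively weakly compact, so $W$, and hence $T_m=UW$, is weakly compact. The hypothesis then forces $T_m$ to be compact, and a compact operator is completely continuous: passing to subnets exactly as in Proposition~\ref{arin55}, $T_m$ turns the weakly convergent nets $(c_id_j)$ into norm-convergent nets in $B^*$, while $(a_ib_j)$ stays weakly convergent, so the pairing $\langle T_m(c_id_j),a_ib_j\rangle$ converges and the two iterated limits coincide. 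Hence every completely bounded $m$ is biregular, and $B\otimes^h A$ is Arens regular by Proposition~\ref{arin5655}.

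For the converse I would take a weakly compact $T\colon A\to B^*$ and try to show it is compact by producing a factorisation of the shape (completely continuous)$\,\circ\,$(weakly compact), since the composite of a completely continuous operator after a weakly compact one is compact. This is where the hypothesis on the left multiplications enters: the left multiplication on $B\otimes^h A$ by an elementary tensor $b\ot a$ decomposes through ${}_b\tau$ and ${}_a\tau$, and the assumption that for each pair one of ${}_a\tau$, ${}_b\tau$ is compact and the other weakly compact makes these building blocks completely continuous. Intertwining $T$ with these multiplications, via the form $m(b,a)=\langle Ta,b\rangle$ and a bounded approximate identity, should then place $T(A_1)$ inside the image of a completely continuous map applied to a relatively weakly compact set, which is relatively norm compact; hence $T$ is compact.

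The main obstacle is the converse. The forward direction reduces cleanly to the Hilbert-space factorisation of completely bounded forms plus complete continuity of compact operators, but the converse must convert the \emph{pointwise} hypothesis on ${}_a\tau,\,{}_b\tau$ into a single \emph{global} factorisation of an arbitrary weakly compact $T$ through a completely continuous map; keeping track of the approximate identity, of which of the two multiplications is compact at each stage, and of the fact that the completely-continuous-after-weakly-compact composition really exhausts $T(A_1)$, is the delicate point. A secondary subtlety in the forward direction is the simultaneous occurrence of the index $i$ in both $a_i$ and $c_i$ (and $j$ in $b_j,d_j$), which blocks a direct appeal to Grothendieck's bilinear double-limit criterion \cite{groth} and forces the subnet bookkeeping of Proposition~\ref{arin55}.
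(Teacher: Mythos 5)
Your forward direction is essentially the paper's argument: the paper also associates to a completely bounded form $m$ the operator $\tilde m\colon A\to B^*$, invokes the factorization of $\tilde m$ through a column Hilbert space (\cite{effros}, Lemma 13.3.1) to get weak compactness, applies the hypothesis to get compactness, and concludes biregularity. The one difference is that the paper does not re-derive the implication ``$\tilde m$ compact $\Rightarrow$ $m$ biregular''; it simply cites (\cite{ulger}, Theorem 4.5), which is exactly that statement. Your attempt to prove it directly via complete continuity is where the real content sits: because $c_id_j$ and $a_ib_j$ are doubly indexed and multiplication in the biduals is only separately weak$^*$-continuous, the subnet bookkeeping you wave at is not routine, and you are in effect reproving \"{U}lger's theorem. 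It would be cleaner (and is what the paper does) to quote it.

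The converse is a genuine gap. You describe a hoped-for factorization of an arbitrary weakly compact $T\colon A\to B^*$ as (completely continuous)$\circ$(weakly compact) built from the multiplications ${}_a\tau$, ${}_b\tau$ and a bounded approximate identity, but you never produce it, and you say yourself that converting the pointwise hypothesis into a global factorization is ``the delicate point.'' A plan with an acknowledged unresolved obstacle is not a proof. The paper disposes of this half in one line by citing (\cite{ulger2nd}, Theorem 5.3), which is precisely the assertion that under the stated hypothesis on the multiplication operators every weakly compact operator from $A$ to $B^*$ is compact; without that reference (or a worked-out reconstruction of its proof), the second half of the proposition remains unproved in your write-up.
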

\begin{proof}
Let $m$ be  a completely bounded bilinear form and $\tilde{m}: A\to B^*$ be the corresponding operator given by $\tilde{m}(a)(b)=m(a,b)$. By (\cite{effros}, Lemma 13.3.1),
 $\tilde{m}$ can be factored through a column Hilbert space so is weakly compact. Hence  $\tilde{m}$ is compact by hypothesis. Therefore,
by (\cite{ulger}, Theorem 4.5), $m$ is biregular and so $B\ot^h A$ is Arens regular. Converse part follows from (\cite{ulger2nd}, Theorem 5.3).
\end{proof}
In particular, for any operator algebra  $B$ for which $B^*$ has schur property, $B\ot^h A$
is Arens regular for any operator algebra $A$. Thus, for a compact Hausdorff group $G$, $C^*(G)\ot^h A$ is Arens regular for any
operator algebra $A$ by (\cite{ulger2nd}, Theorem 4.5). Similarly, for a compact dispersed topological space $K$, it follows from
(\cite{ulger2nd},  Theorem 6.4 and Corollary 6.5) that $C(K)\ot^h A$ is
Arens regular  for any operator algebra $A$.  Also, by (\cite{free}, Theorem 3.3), for a closed subalgebra $B$ of $K(H)$
having Dunford-Pettis property, $B\ot^h A$ is Arens regular for any operator algebra $A$.

We now  investigate the certain multiplication on one of the Banach algebras and look at the Arens regularity of the operator space projective,
the Banach space projective  and the Haagerup
tensor product.

One can immediately verify that if the multiplication on one of the Banach algebras is trivial, then  $A\widehat{\ot} B$ (resp. $A\ot^{h} B$)
is  always Arens regular.  Now let $B$ be any operator space. Define an algebra multiplication on $B$ as
 \[b_1b_2 = \phi(b_1)b_2 (b_1; b_2 \in B) ;\]
where $\phi\in B^*$ with $\|\phi\|\leq 1$. Note that such a $\phi$ is automatically a multiplicative linear functional and multiplication will be non-trivial provided
$\phi$ is one-to-one. Also, one can easily verify that $B$ is an associative Banach algebra under the above multiplication. Now by using
Ruan's axiom and the fact that $\|\phi\|_{cb}=\|\phi\|$, we have
\begin{align}
\|[b_{ij}][b'_{ij}]\|&=\|[\ds\sum_{k=1}^n\phi(b_{ik})b'_{kj}]\|\notag\\
&=\|[\phi(b_{ij})] [b'_{ij}]\| \notag\\
&\leq \|[\phi(b_{ij})]\|\|[b'_{ij}]\|\notag\\
&\leq \|[b_{ij}]\|\|[b'_{ij}]\| .\notag
\end{align}
Thus $B$ is an operator algebra by (\cite{quob}, Theorem 1.3).
\begin{prop}
Let $A$ be an operator algebra, and let $B$ be an operator space such that the multiplication on $B$ is given by
\[b_1b_2 = \phi(b_1)b_2 (b_1; b_2 \in B) ;\]
where $\phi$ is as above. Then $A\ot^{h} B$
is Arens regular.
\end{prop}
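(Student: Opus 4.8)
\emph{The plan.} I would run everything through Proposition~\ref{arin5655}: since $A$ is an operator algebra and $B$, equipped with $b_1b_2=\phi(b_1)b_2$, is an operator algebra (as established just above the statement), $A\ot^h B$ is Arens regular if and only if every completely bounded bilinear form $m\colon A\times B\to\C$ is biregular. So I fix such an $m$, let $f\in(A\ot^h B)^*$ be the corresponding functional, and take sequences $(a_i),(b_j)$ in $A_1$ and $(c_i),(d_j)$ in $B_1$ for which both iterated limits of $m(a_ib_j,c_id_j)$ exist; the goal is to show they coincide. The decisive simplification is that the multiplication on $B$ collapses the second slot,
\[
m(a_ib_j,\,c_id_j)=\phi(c_i)\,m(a_ib_j,d_j)=m(\alpha_ib_j,d_j),\qquad \alpha_i:=\phi(c_i)a_i,
\]
and $|\phi(c_i)|\le\|\phi\|\,\|c_i\|\le 1$ forces $\alpha_i\in A_1$. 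Thus the entire $i$-dependence originating in $B$ is absorbed into the first leg, where $A$ genuinely multiplies. I then introduce the bounded operator $S_f\colon A\to(A\ot^h B)^*$ determined by $S_f(a)(x\ot y)=m(ax,y)$ (well defined and bounded because left multiplication by $a$ is completely bounded on the operator algebra $A$), so that
\[
m(a_ib_j,c_id_j)=\bigl\langle S_f(\alpha_i),\,b_j\ot d_j\bigr\rangle,\qquad \alpha_i\in A_1,\ \ b_j\ot d_j\in (A\ot^h B)_1 .
\]

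\emph{The central claim is that $S_f$ is weakly compact.} Here I would use the standard representation of a completely bounded bilinear form coming from Haagerup duality (see \cite{pisi},\cite{effros}): there are a Hilbert space $\mathcal H$, complete contractions $\varphi_0\colon A\to B(\mathcal H)$ and $\psi\colon B\to B(\mathcal H)$, and vectors $\xi,\eta\in\mathcal H$ with $m(a,b)=\langle \varphi_0(a)\psi(b)\xi,\eta\rangle$. Since $A$ is an operator algebra, I dilate $\varphi_0$ to a completely contractive homomorphism $\pi\colon A\to B(\mathcal K)$ on a larger Hilbert space $\mathcal H\subseteq\mathcal K$, with $\varphi_0(a)=P_{\mathcal H}\pi(a)|_{\mathcal H}$ (see \cite{blect}). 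Because $\pi(ax)=\pi(a)\pi(x)$ and $\eta\in\mathcal H$, this gives
\[
S_f(a)(x\ot y)=m(ax,y)=\langle \varphi_0(ax)\psi(y)\xi,\eta\rangle=\langle \pi(x)\psi(y)\xi,\ \pi(a)^*\eta\rangle ,
\]
so $S_f(a)$ depends on $a$ only through the vector $\pi(a)^*\eta\in\mathcal K$. Hence $S_f=\Xi\circ T$, where $T\colon A\to\mathcal K$, $T(a)=\pi(a)^*\eta$, and $\Xi\colon\mathcal K\to(A\ot^h B)^*$ sends $\zeta$ to the completely bounded form $(x,y)\mapsto\langle\pi(x)\psi(y)\xi,\zeta\rangle$; both are bounded. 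Factoring through the reflexive space $\mathcal K$, the operator $S_f$ is weakly compact.

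\emph{Finishing the double limit.} Weak compactness of $S_f$ makes $\{S_f(\alpha_i)\}$ relatively weakly compact in $(A\ot^h B)^*$, and since each test vector $b_j\ot d_j$ lies in $(A\ot^h B)_1\subseteq\bigl((A\ot^h B)^{**}\bigr)_1$, Grothendieck's double limit criterion for relatively weakly compact sets \cite{groth} yields
\[
\lim_i\lim_j\bigl\langle S_f(\alpha_i),b_j\ot d_j\bigr\rangle=\lim_j\lim_i\bigl\langle S_f(\alpha_i),b_j\ot d_j\bigr\rangle ,
\]
that is, the two iterated limits of $m(a_ib_j,c_id_j)$ agree. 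Thus $m$ is biregular, and Proposition~\ref{arin5655} delivers the Arens regularity of $A\ot^h B$. The step I expect to be the main obstacle is precisely the weak compactness of $S_f$: it is here that the argument must combine the operator-algebra structure of $A$ (to replace the representing map by a genuine homomorphism via dilation) with the collapse $c_id_j=\phi(c_i)d_j$ that removed the coupling in the $B$-variable; one must also take care that the relevant test elements $b_j\ot d_j$ are viewed inside the bidual $(A\ot^h B)^{**}$ so that Grothendieck's criterion applies verbatim.
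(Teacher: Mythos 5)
Your reduction — $m(a_ib_j,c_id_j)=m(\alpha_ib_j,d_j)$ with $\alpha_i=\phi(c_i)a_i\in A_1$ — is correct, and so is the final application of Grothendieck's double limit criterion once relative weak compactness of $S_f(A_1)$ in $(A\ot^h B)^*$ is granted. The gap sits exactly where you flagged it: the dilation step. The Christensen--Sinclair representation $m(a,b)=\langle\varphi_0(a)\psi(b)\xi,\eta\rangle$ produces $\varphi_0$ as a mere completely contractive \emph{linear} map, and an arbitrary complete contraction on an operator algebra cannot be dilated to a corner of a completely contractive homomorphism; the BRS-type dilation theorems you are implicitly invoking apply only to maps that are already homomorphisms. (Already for $A=\C$: a corner $P_{\mathcal H}\pi(1)|_{\mathcal H}$ of a contractive idempotent is a positive contraction, so a generic contraction $\varphi_0(1)$ admits no such dilation.) Without $\pi$ the identity $m(ax,y)=\langle\pi(x)\psi(y)\xi,\pi(a)^*\eta\rangle$ collapses, and with it the factorization of $S_f$ through a Hilbert space. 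The claim itself is salvageable, but by a different mechanism: since multiplication $A\ot^h A\to A$ is completely contractive for operator algebras and $\ot^h$ is associative and functorial, the trilinear form $a\ot x\ot y\mapsto m(ax,y)$ is completely bounded on $A\ot^h A\ot^h B$, and the \emph{multilinear} Christensen--Sinclair theorem gives $m(ax,y)=\langle\Phi_1(a)\Phi_2(x)\Phi_3(y)\xi,\eta\rangle$; then $S_f(a)$ depends on $a$ only through $\Phi_1(a)^*\eta$, and weak compactness follows as you intended.

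You should also note that the paper's own proof avoids weak compactness entirely and is much shorter: the left slice map $L_{\phi}\colon A\ot^h B\to A$, $a\ot b\mapsto\phi(b)a$, is a continuous algebra homomorphism (multiplicativity comes from $bd=\phi(b)d$), surjective via $a\mapsto a\ot b_0$ with $\phi(b_0)=1$, and injective, hence a topological algebra isomorphism by the open mapping theorem. Arens regularity of $A\ot^h B$ then reduces to the double limit criterion for $f\circ L_{\phi}^{-1}$ on the operator algebra $A$, applied to the elements $L_{\phi}(a_i\ot c_i),\,L_{\phi}(b_j\ot d_j)\in A_1$. If you repair the weak compactness step as above, your argument becomes a valid but considerably heavier alternative.
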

\begin{pf}
Consider the left slice map $L_{\phi}: A\ot^{h} B \to A$ ($a\ot b\to \phi(b)a$). Note that $L_{\phi}$ is an algebra homomorphism. We claim
that $L_\phi$ is a bijective map. Fix $b_0\in B$ such that $\phi(b_0)=1$.
Thus for any $a\in A$, $L_{\phi}(a\ot b_0)=a$. Now let  $L_{\phi}(\ds\sum_{i=1}^{\infty}a_i\ot b_i)=0$.
As  $\phi$ is one-to-one, so $\ds\sum_{i=1}^{\infty}a_i\psi(b_i)=0$ for any $\psi\in B^*$ and hence $\ds\sum_{i=1}^{\infty}a_i\ot b_i=0$ by
(\cite{vandee3}, Proposition 4.4). Thus $L_{\phi}$ is a bijective map. Now let $m $ be a completely bounded bilinear form such that
 for any two pairs of sequences $(a_{i})$, $(b_{j})$ in $A_{1}$ and $(c_{i})$, $(d_{j})$ in $B_{1}$, the
 iterated limits $\displaystyle\lim_{i}\displaystyle\lim_{j} m(a_{i}b_{j}, c_{i}d_{j})$ and $\displaystyle\lim_{j}\displaystyle\lim_{i}m
 (a_{i}b_{j}, c_{i}d_{j})$ exist. Let $f$ be the associated linear functional corresponding to $m$. It is clear that $L_{\phi}(a_{i}\ot c_{i})$ and
 $L_{\phi}(b_{j}\ot d_{j})$ are in $A_1$. Using the fact that the operator algebras are  Arens regular, we have the following equality:
  \begin{align}
\displaystyle\lim_{i}\displaystyle\lim_{j} f\circ L_{\phi}^{-1} (L_{\phi}(a_{i}\ot c_{i}) L_{\phi}(b_{j}\ot d_{j}))
&=\displaystyle\lim_{j}\displaystyle\lim_{i} f\circ L_{\phi}^{-1} (L_{\phi}(a_{i}\ot c_{i}) L_{\phi}(b_{j}\ot d_{j}))\notag
\end{align}
Thus, by the algebra homomorphism of $L_{\phi}$, we have
\begin{align}
\displaystyle\lim_{i}\displaystyle\lim_{j} f(a_{i}b_{j}\ot c_{i}d_{j})
&=\displaystyle\lim_{j}\displaystyle\lim_{i} f(a_{i}b_{j}\ot c_{i}d_{j})\notag
\end{align}
and hence the result follows from Theorem \ref{arin55}.
\end{pf}

By Proposition \ref{main3}, for exact operator algebras $A$ and $B$ for which multiplication on $B$ is defined by
$b_1b_2=\phi(b_1)b_2$, $A\widehat{\ot} B$ is Arens regular.

Let us consider the following multiplication defined on elementary tensor as:
\[(a\ot b)(c\ot d)=m(a,b)(c\ot d),\]
for $a,c\in A$ and $b,d\in B$, where $m$ is a bounded bilinear form on $A\times B$  with $\|m\|\leq 1$.

\begin{prop}
For any two Banach algebras $A$ and $B$, $A\otimes ^{\gamma} B$ with respect to the above multiplication is an Arens regular Banach algebra.
\end{prop}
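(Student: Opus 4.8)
The plan is to recognise that, despite appearances, this multiplication is of the same degenerate type as the one treated in the previous proposition, and then to verify Arens regularity by a direct computation of the two Arens products. First I would note that, by the universal property of the projective tensor norm, the bilinear form $m$ with $\|m\|\le 1$ corresponds to a bounded linear functional $\hat m\in (A\ot^\gamma B)^*$ with $\|\hat m\|\le 1$ and $\hat m(a\ot b)=m(a,b)$. Extending the stated rule bilinearly and then by continuity, one checks that the multiplication is simply
\[ u\cdot v=\hat m(u)\,v\qquad (u,v\in A\ot^\gamma B); \]
in particular it is well defined (the left factor enters only through the scalar $\hat m(u)$), associative, and contractive, since $\|u\cdot v\|=|\hat m(u)|\,\|v\|\le \|u\|\,\|v\|$. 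Thus $A\ot^\gamma B$ is a Banach algebra of the form ``$xy=\psi(x)y$'' for the fixed contractive functional $\psi=\hat m$, the projective analogue of the algebra $b_1b_2=\phi(b_1)b_2$ considered above.

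Next I would compute the two Arens products directly, writing $\psi=\hat m$ and $X=A\ot^\gamma B$. Using the conventions recalled in Section~2, for $f\in X^*$ and $b\in X$ one finds $bf=f(b)\,\psi$ and $fb=\psi(b)\,f$, whence, for $F,G\in X^{**}$, the functionals ${}_Gf=G(f)\,\psi$ and $f_F=F(\psi)\,f$. Substituting into the definitions gives
\[ F\square G(f)=F({}_Gf)=G(f)\,F(\psi),\qquad F\lozenge G(f)=G(f_F)=F(\psi)\,G(f). \]
Since these two scalars coincide for every $f$, we obtain $F\square G=F\lozenge G=F(\psi)\,G$ for all $F,G\in X^{**}$, which is exactly Arens regularity.

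I do not expect any genuine obstacle here: the degeneracy of the product collapses every relevant iterated limit to a product of two independent scalars, so no compactness or biregularity input (as in Propositions~\ref{arin55} and~\ref{arin5655}) is needed. The only point requiring care is the bookkeeping with the left and right module actions under the paper's sign and order conventions; getting $bf$ and $fb$ the right way round is what makes both products land on the common value $F(\psi)\,G$.
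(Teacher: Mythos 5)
Your proposal is correct and follows essentially the same route as the paper: both compute the module actions ($bf=f(b)\tilde m$, $fb=\tilde m(b)f$) and then evaluate the two Arens products directly, finding that each collapses to the same degenerate product $F(\tilde m)\,G$. Your version is in fact slightly cleaner about the conventions (the paper's final line transposes the roles of $F$ and $G$, though the conclusion is unaffected).
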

\begin{proof}
For $a, c\in A$ and $b,d\in B$, $a\ot b f(c\ot d)=f((c\ot d)(a\ot b))= f(m(c,d) (a\ot b))= m(c,d)f(a\ot b)$ and $f a\ot b (c\ot d)=f((a\ot b)(c\ot d))
=m(a,b)f(c\ot d)$. Thus $a\ot b f=
f(a\ot b)\tilde{m}$ and $f a\ot b=m(a,b)f$, which further gives   $f F(a\ot b)= F(f a \ot b)=F(m(a,b) f)=m(a,b) F(f)=\tilde{m}(a\ot b) F(f)$ and
$Ff(a\ot b)=
 F( a \ot b f)=F( f(a\ot b)\tilde{m})= F(\tilde{m})f(a\ot b)$, where $\tilde{m}$
is a bounded linear functional on $A\otimes ^{\gamma} B$. So by linearity and continuity, it follows that  $f F=F(f) \tilde{m}$ and
$Ff =F(\tilde{m}) f$. Thus $F \square G(f)=F(Gf)= F(G(\tilde{m}) f)= G(\tilde{m}) F(f)$ and $F \lozenge G(f)=G(f F)= F(f) G(\tilde{m})$, and hence the result.
\end{proof}
Note that if we consider the jointly completely bounded bilinear forms and the completely bounded bilinear forms instead of bounded bilinear forms
in  the above, then similar result holds for the Haagerup tensor product and the operator space projective tensor product.\\
\noindent Another multiplication is given as follows:
\[(a\ot b)(c\ot d)=m(a,d)(b\ot c),\]
for $a,c\in A$ and $b,d\in B$, where $m$ is a bounded bilinear form on $A\times B$. Then one can easily see that for any two Banach algebras $A$ and $B$, $A\otimes ^{\gamma} B$ with respect to the above multiplication is an Arens regular Banach algebra if and
only if  $m$ is biregular.

\end{document}